\newtheorem*{rep@theorem}{\rep@title}
\newcommand{\newreptheorem}[2]{
\newenvironment{rep#1}[1]{
 \def\rep@title{#2~\ref{##1}}
 \begin{rep@theorem}}
 {\end{rep@theorem}}}
\theoremstyle{plain}
\newtheorem*{thm*}{Theorem}
\newtheorem{thm}{Theorem}[section]
\newtheorem{cor}[thm]{Corollary}
\newtheorem{lem}[thm]{Lemma}
\newtheorem{fcts}[thm]{Facts}
\newtheorem*{lem*}{Lemma}
\newtheorem{prp}[thm]{Proposition}
\theoremstyle{definition}
\newtheorem{ex}[thm]{Example}
\newtheorem{rem}[thm]{Remark}
\newtheorem{problem}[thm]{Problem}
\newcommand{\wh}{\widehat}
\newcommand{\wt}{\widetilde}
\newcommand{\N}{\mathbb{N}}
\newcommand{\F}{\mc{F}}
\newcommand{\vanish}[1]{}
\def\wh{\widehat}
\def\wt{\widetilde}
\def\({\left(}
\def\){\right)}
\def\no={\,{\,|\!\!\!\!\!=\,\,}}
\def\wt{\widetilde}
\def\wh{\widehat}
\def\no={\,{\,|\!\!\!\!\!=\,\,}}
\newcommand{\xqedhere}[2]{%
  \rlap{\hbox to#1{\hfil\llap{\ensuremath{#2}}}}}
\newcommand\Defn[1]{\textbf{\color{blue}#1}}
\newcommand{\cm}[1]{}
\newcommand\mc[1]{\mathcal{#1}}
\newcommand\mr[1]{\mathrm{#1}}
\newcommand\scr[1]{\mathscr{#1}}
\newcommand{\K}{\Delta}
\newcommand\Sk{\mr{Sk}}
\newcommand\Cl{\mr{Cl}}
\DeclareMathOperator\cdr{{\upvarrho}}
\DeclareMathOperator\cdd{{\delta}}
\DeclareMathOperator{\lk}{\ell}
\DeclareMathOperator{\Lk}{lk}
\DeclareMathOperator{\eLk}{\wt{lk}}
\DeclareMathOperator{\St}{st}
\title[Higher Chordality]{Higher chordality: from graphs to complexes}
\author{Karim A.~Adiprasito}
\address{School of Mathematics, Institute for Advanced Study, Princeton, US
and \newline
Einstein Institute of Mathematics, University of Jerusalem, Jerusalem, Israel}
\email{adiprasito@math.fu-berlin.de, adiprasito@ihes.fr}
\author{Eran Nevo}
\address{Einstein Institute of Mathematics, The Hebrew University of Jerusalem, Jerusalem, Israel}
\email{nevo@math.huji.ac.il}
\author{Jose A.~Samper}
\address{Department of Mathematics, University of Washington at Seattle, USA}
\email{samper@math.washington.edu}
\keywords{simplicial complex, chordal graph, Leray property, Castelnuovo-Mumford regularity}
\subjclass[2010]{05Cxx, 05E45, 13F55}
\date{\today}
\thanks{K.~A.~Adiprasito acknowledges support by an IPDE/EPDI postdoctoral fellowship, a Minerva postdoctoral fellowship of the Max Planck Society, and NSF Grant DMS 1128155.
}
\thanks{Research of E.~Nevo was partially supported by an ISF grant 805/11.
}
\begin{document}
\begin{abstract}
We generalize the fundamental graph-theoretic notion of chordality for
higher dimensional simplicial complexes by putting it into
a proper context within homology theory. We generalize some of the classical results of graph chordality to this generality, including the fundamental relation to the Leray property and chordality theorems of Dirac.
\end{abstract}

\maketitle
\section{Introduction}
Chordality is one of the most fundamental notions in graph theory. It finds application and stands in relation to graph colorings and perfect graphs, algorithmic graph theory, graph embeddings and Appolonian packings.

However, graph chordality only allows us to study the one-dimensional skeleton of a simplicial complex and therefore, for instance, only applies to the study of initial algebraic Betti numbers of the complex.

This motivates us to develop suitable notions of chordality for higher skeleta of simplicial complexes.
In this paper we develop a chordality notion in the realm of (simplicial) homology theory and study its fundamental properties.
In forthcoming papers, we will develop another chordality notion in the realm of stress spaces (from framework rigidity, e.g. \cite{Lee,TWW}, and McMullen's weight algebra), relate it to the homological notion developed here, and
explore its relevance in geometry (for polytopes and for simplicial nonpositive curvature) and in commutative algebra (for algebraic shifting and algebraic Betti numbers).

The classical definition states that a graph $G$ is chordal if for every simple cycle $z$ of length $\ge 4$ in $G$, there exists an edge $e\in G$ that connects two non-adjacent vertices of $z$.

One of the fundamental theorems concerning chordal graphs is the following 
result:
\begin{thm}
\label{thm:folkloreChord}
Let $G$ be a simple graph. Then the following are equivalent:
\begin{enumerate}
\item $G$ is chordal.
\item The complex of cliques of every connected induced subgraph of $G$ is contractible.
\item The complex of cliques of every connected induced subgraph of $G$ is acyclic, over any ring of coefficients.
\item The Stanley-Reisner ideal $I=(x_ix_j:\ ij\notin E(G),\ i,j\in V(G))$ has a linear resolution. Equivalently, $I$ has regularity $2$ (over some, equivalently any, field), unless $G$ is complete and $I=(0)$.
\end{enumerate}
\end{thm}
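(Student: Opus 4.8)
The plan is to prove the cycle of implications $(1)\Rightarrow(2)\Rightarrow(3)\Rightarrow(4)\Rightarrow(1)$. Throughout I identify the clique complex of an induced subgraph $G[W]$ with the induced subcomplex $\Cl(G)[W]$ of the clique complex $\Cl(G)$, so that all four conditions become assertions about the family of induced subcomplexes of $\Cl(G)$; I also use freely that chordality is hereditary, i.e. passes to induced subgraphs.

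For $(1)\Rightarrow(2)$ it suffices, by induction on $|V(G)|$, to show that the clique complex of a connected chordal graph $G$ is collapsible, hence contractible. By Dirac's lemma $G$ has a simplicial vertex $v$; since $N(v)$ is a clique it lies in a single connected component of $G-v$, and as $G$ is connected every other vertex of $G-v$ is joined to $v$ through $N(v)$, so $G-v$ is again connected (and chordal). Because $v$ is simplicial, $\Lk_{\Cl(G)}(v)$ is the full simplex on $N(v)$, and fixing any $u\in N(v)$ one collapses the faces of $\Cl(G)$ containing $v$ onto $\Cl(G-v)$ by pairing $\{v\}\cup S$ with $\{v\}\cup S\cup\{u\}$, processed in order of decreasing $|S|$. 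Thus $\Cl(G)$ collapses onto $\Cl(G-v)$, which is collapsible by the inductive hypothesis. Implication $(2)\Rightarrow(3)$ is immediate, since a contractible space is acyclic over every ring.

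The equivalence $(3)\Leftrightarrow(4)$ is Fröberg's theorem, which I would extract from Hochster's formula. First, $I$ is exactly the Stanley--Reisner ideal of $\Cl(G)$, its minimal nonfaces being the non-edges of $G$; in particular $I$ is generated in degree $2$, so possessing a linear resolution is equivalent to $\mathrm{reg}(S/I)=1$, i.e. $\mathrm{reg}(I)=2$, whenever $I\neq 0$, i.e. $G$ is not complete. Hochster's formula gives $\beta_{i,j}(I)=\sum_{|W|=j}\dim_{\fld}\widetilde H_{j-i-2}(\Cl(G)[W];\fld)$, so $I$ has a linear resolution over $\fld$ if and only if $\widetilde H_k(\Cl(G)[W];\fld)=0$ for every $W\subseteq V(G)$ and every $k\ge 1$. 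Under $(3)$, each connected component of $G[W]$ is a connected induced subgraph of $G$, hence has acyclic clique complex; since $\Cl(G[W])$ is their disjoint union, its reduced homology vanishes in all positive degrees, giving $(4)$ over every field. Conversely, if $G$ is not chordal, choose an induced cycle $z$ of length $\ge 4$; then $G[V(z)]$ is triangle-free, so $\Cl(G)[V(z)]=z\simeq S^1$, and $\widetilde H_1(\Cl(G)[V(z)];\fld)\neq 0$ over every field, which by Hochster's formula produces a non-zero $\beta_{i,i+3}(I)$ and destroys the linear resolution over every field; the same induced cycle also has non-vanishing reduced homology over every ring, so $(3)$ fails too. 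This proves $(4)\Rightarrow(1)$, and since $(1)$--$(3)$ are manifestly coefficient-independent, it also settles the ``over some, equivalently any, field'' clause.

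I expect the main obstacle to be bookkeeping rather than conceptual: aligning the Hochster/Fröberg dictionary in $(3)\Leftrightarrow(4)$ precisely --- matching a linear resolution, the value $\mathrm{reg}(I)=2$, and the vanishing of reduced homology of all induced subcomplexes of $\Cl(G)$ --- and checking that removing a simplicial vertex preserves connectivity, which is the combinatorial input (Dirac) that makes the induction in $(1)\Rightarrow(2)$ run.
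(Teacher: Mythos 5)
The paper does not actually prove Theorem \ref{thm:folkloreChord}: it is stated as a classical/folklore result and delegated to the literature (Lekkerkerker--Boland, Fr\"oberg, Kalai, Kalai--Meshulam), so there is no internal proof to compare yours against. Your argument is correct and is essentially the standard assembly of the known ingredients. The implication $(1)\Rightarrow(2)$ via a simplicial vertex $v$ is sound: your observation that $G-v$ stays connected (reroute any path through $v$ across the clique $N(v)$) is exactly the point that makes the induction run, and the pairing $\{v\}\cup S \leftrightarrow \{v\}\cup S\cup\{u\}$, processed by decreasing $|S|$, really does produce free pairs at each stage because every coface of $\{v\}\cup S$ other than $\{v\}\cup S\cup\{u\}$ has already been removed; alternatively one can note that $\Cl_1 G$ is the union of $\Cl_1(G-v)$ with the cone $\{v\}* 2^{N(v)}$ glued along the simplex $2^{N(v)}$. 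The Hochster bookkeeping in $(3)\Leftrightarrow(4)$ is also correct: with $\beta_{i,j}(I)=\beta_{i+1,j}(S/I)$ the linearity of the resolution is equivalent to the vanishing of $\tilde{H}_k(\Cl_1(G)_{|W};\mathbb{F})$ for all $W$ and all $k\ge 1$, which passes through disjoint unions of components as you say, and an induced cycle of length $\ell\ge 4$ produces a nonzero $\beta_{\ell-3,\ell}(I)$ over every field, closing the cycle of implications and settling the field-independence clause. The only external input you do not prove is Dirac's theorem that a chordal graph has a simplicial vertex, which is a legitimate citation (the paper itself invokes it as Dirac's Reverse Propagation Theorem). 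No gaps.
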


Parts of this result can be traced back at least to the sixties \cite{LekkerBoland}, and has been proven in a variety of settings, cf.\ \cite{Froberg88}, \cite{KalaiRig}, \cite{KM}, \cite{Green}.

Moving to higher dimensional simplicial complexes, attempts to generalize Fr\"{o}berg's result $(1)\Leftrightarrow (4)$ \cite{Froberg88} have been made in recent years.
Several combinatorial notions of chordality were introduced, e.g. \cite{Ha-VanT, Emtander, Woodroofe}, which imply the existence of a linear resolution over any field, but \emph{not} vice versa. Indeed, in Woodroofe's notion the Alexander dual $\K^{\vee}$ of the complex $\K$ is shellable. However, according to the Eagon-Reiner theorem \cite{Eagon-Reiner}, it is enough for $\K^{\vee}$ to be Cohen-Macaulay over the field to guarantee a linear resolution of the Stanley-Reisner ideal $I_{\K}$. In fact, already in dimension $2$, there are examples of non-shellable contractible complexes with connected vertex links, e.g. Hachimori \cite{Hachimori}, and thus are Cohen-Macaulay over \emph{any} field. Thus, for Hachimori's complex $\Gamma$, $\K=\Gamma^{\vee}$ is not chordal according to any of the above definitions but $I_{\K}$ has a linear resolution over any field (this answers \cite[Question 1]{Emtander}, see also \cite{Olteanu}).

In the other direction, Connon and Faridi found a necessary combinatorial condition for the existence of a linear resolution over all fields \cite{Connon-Faridi2}, and a
necessary and sufficient condition when the characteristic is $2$ \cite{Connon-Faridi}.
In the latter case, the homological and combinatorial descriptions are essentially the same.
Note that generally, having a linear resolution is characteristic dependent, as the $6$-vertex triangulation of the projective plane demonstrates.


The above discussion demonstrates that
the chordality notion is naturally best studied as a homological one.
A natural formulation for an extension would be to define the following. A simplicial complex $\Delta$ is \emph{decomposition $k$-chordal} if and only if every (simplicial) $k$-cycle $z$ in $\Delta$ (with respect to some coefficient ring) admits a \emph{decomposition}, i.e., it can be written as a sum of complete $k$-cycles, which support only vertices that are supported in $z$.
By a complete $k$-cycle we mean a cycle whose faces with nonzero coefficient are precisely the $k$-faces of a $(k+1)$-simplex.
\footnote{Decomposition $k$-chordal complexes slightly generalize the \emph{strongly triangulable} complexes of Cordovil, Lemos and Linhares Sales \cite{Cord-Lem-Sa}, in the sense that we allow facets of dimension $<k$. More importantly, our combinatorial analog of Dirac's elimination order is significantly more general than the one in \cite{Cord-Lem-Sa}, yet implies decomposition chordality, thus our Proposition \ref{prp:dirac_to_chordal} implies \cite[Thm.5.2]{Cord-Lem-Sa}.}

For decomposition chordality, the following theorem seems the appropriate generalization of Theorem \ref{thm:folkloreChord}. It sharpens the Eagon-Reiner theorem \cite{Eagon-Reiner} by providing a more ``economical" criterion for deciding whether the Alexander dual of a complex is Cohen-Macaulay.
\enlargethispage{5mm}

\newcommand{\RC}{\scr{Res}}
\newcommand{\DC}{\scr{Dec}}
\newcommand{\Res}{\mathrm{Res}}
\newcommand{\Dec}{\mathrm{Dec}}

\begin{thm}\label{mthm:persist}[Propagation of chordality]
Let $\Delta$ be any simplicial complex and assume that
\begin{compactenum}[\rm (1)]
\item $\Delta$ is decomposition $\ell$-chordal for all $k\le \ell\le {2k-1}$, and
\item $\Delta$ has no missing (a.k.a. empty) faces of dimension $> k$.
\end{compactenum}
Then
$\Delta$ is decomposition $\ell$-chordal for all $\ell \ge k$,
and is $k$-Leray.
\end{thm}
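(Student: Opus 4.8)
The plan is to recast both conclusions as a single homological statement, prove it by peeling off vertices, and isolate the one genuinely stubborn configuration.

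First I would reduce to homology. Using hypothesis (2), one checks that for each fixed $\ell\ge k$ the following are equivalent: (i) $\Delta$ is decomposition $\ell$-chordal, and (ii) $\widetilde H_\ell(\Delta_W;\fld)=0$ for every vertex set $W\subseteq V(\Delta)$. Indeed, a complete $\ell$-cycle whose faces all lie in $\Delta$ is the boundary $\partial\tau$ of an $(\ell+2)$-element set $\tau$ every $(k+1)$-subset of which is a face; since $\ell+1>k$, hypothesis (2) forces $\tau\in\Delta$, so a decomposition $z=\sum c_i\,\partial\tau_i$ with $\operatorname{supp}\tau_i\subseteq\operatorname{supp}z$ exhibits $z$ as a boundary inside $\Delta_{\operatorname{supp}z}$, and conversely any boundary in $\Delta_{\operatorname{supp}z}$ unwinds into such a sum. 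Since hypothesis (1) gives (ii) for $k\le\ell\le 2k-1$ and since ``$k$-Leray'' is precisely (ii) for all $\ell\ge k$, everything reduces to the claim: \emph{if $\Delta$ has no missing faces of dimension $>k$ and $\widetilde H_m(\Delta_W)=0$ for all $W$ and all $k\le m\le 2k-1$, then $\widetilde H_\ell(\Delta_W)=0$ for all $W$ and all $\ell\ge 2k$.} I would prove this by induction on $|V(\Delta)|$; since every induced subcomplex inherits the hypotheses, it suffices to treat $W=V(\Delta)$.

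Next, the peeling step. Fix $\ell\ge 2k$ and an $\ell$-cycle $z$; I claim $z$ bounds in $\Delta_{\operatorname{supp}z}$, arguing by a secondary induction on $|\operatorname{supp}z|$. If $\operatorname{supp}z\ne V(\Delta)$ we are done by the outer induction applied to $\Delta_{\operatorname{supp}z}$. Otherwise pick any vertex $v$ and split $z=v\cdot w+z'$ at chain level, where $w:=\partial_v z$ is an $(\ell-1)$-cycle supported inside $\operatorname{lk}(v,\Delta)$, $z'$ is the part of $z$ avoiding $v$, and $\partial z'=-w$. If $w=\partial u$ for some chain $u$ of $\operatorname{lk}(v,\Delta)_{\operatorname{supp}w}$, then $v\cdot u$ is a genuine chain of $\Delta$ (a face of $\operatorname{lk}(v,\Delta)$ joined with $v$ is a face of $\Delta$), and $z$ differs from the $\ell$-cycle $u+z'$, which is supported off $v$, by the boundary $\pm\,\partial(v\cdot u)$; so we conclude by the secondary induction. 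It therefore remains to see $\widetilde H_{\ell-1}\!\big(\operatorname{lk}(v,\Delta)_{\operatorname{supp}w}\big)=0$. The Mayer--Vietoris sequence of $\Delta_{\operatorname{supp}w\cup\{v\}}=\operatorname{st}(v,\Delta_{\operatorname{supp}w\cup\{v\}})\cup\Delta_{\operatorname{supp}w}$, using contractibility of the closed star, squeezes $\widetilde H_{\ell-1}(\operatorname{lk}(v,\Delta)_{\operatorname{supp}w})$ between a quotient of $\widetilde H_\ell(\Delta_{\operatorname{supp}w\cup\{v\}})$ and a subgroup of $\widetilde H_{\ell-1}(\Delta_{\operatorname{supp}w})$; the latter vanishes because $\ell-1\ge k$, and the former vanishes by the outer induction \emph{provided} $\operatorname{supp}w\cup\{v\}\subsetneq V(\Delta)$. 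Hence the only case left is $\operatorname{supp}z=V(\Delta)$ with $\operatorname{supp}(\partial_v z)=V(\Delta)\setminus\{v\}$ for every $v$ — in which case every pair of vertices lies in a common face of $z$, so $\Delta$ has complete $1$-skeleton.

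The hard part is this complete-$1$-skeleton case, where no vertex deletion shrinks the support and the Mayer--Vietoris reduction merely loops (for any $v$ one only obtains $\widetilde H_{2k}(\Delta)\cong\widetilde H_{2k-1}(\operatorname{lk}(v,\Delta))$, and conversely). Here I would pass to the Alexander dual $\Delta^{\vee}$ on $V=V(\Delta)$: because every minimal non-face of $\Delta$ has at most $k+1$ vertices, every facet of $\Delta^{\vee}$ has dimension at least $n-k-2$, where $n=|V|$, and the duality $\widetilde H_i(\Delta_W)\cong\widetilde H_{|W|-i-3}\big(\operatorname{lk}_{\Delta^{\vee}}(V\setminus W)\big)$ turns the hypotheses into: every link of $\Delta^{\vee}$ has vanishing reduced homology throughout the $k$-term window just below its expected top dimension, while $\operatorname{lk}_{\Delta^{\vee}}(a)\cong(\Delta\setminus a)^{\vee}$ already satisfies the full Reisner-type vanishing by the outer induction. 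One then has to bootstrap — peeling vertices off $\Delta^{\vee}$ with Mayer--Vietoris and translating the resulting deletions and links back into \emph{induced} subcomplexes of $\Delta$, which are $k$-Leray by the outer induction — so as to propagate the vanishing of the homology of all links of $\Delta^{\vee}$ down to every degree below their top dimension, i.e.\ to a Cohen--Macaulay-type conclusion for $\Delta^{\vee}$; Alexander duality then returns $\widetilde H_\ell(\Delta)=0$ for all $\ell\ge 2k$. The delicate point, and the place where the upper bound $2k-1$ in hypothesis (1) is consumed exactly, is that the window has length $k$, which must beat the amount (at most $k-2$) by which the facet dimensions of $\Delta^{\vee}$ can vary; carrying out this quantitative bootstrap is the technical core of the argument.
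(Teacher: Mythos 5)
Your reduction to the homological statement and the vertex-peeling step are sound: the Mayer--Vietoris argument correctly shows that the only obstruction to peeling a vertex $v$ off an $\ell$-cycle $z$ is the case $\operatorname{supp}(\partial_v z)=V\setminus\{v\}$ for every $v$. But this is not a degenerate corner case --- it is the main case (any $k$-clique complex carrying a cycle of full support lands there), and your proposal does not actually prove it. The Alexander-duality ``bootstrap'' is left entirely unexecuted, and as described it is essentially circular: by Eagon--Reiner/Terai, the Cohen--Macaulay-type conclusion for $\Delta^{\vee}$ that you propose to bootstrap toward is equivalent to the $k$-Leray conclusion you are trying to establish. The looping you observe ($\tilde H_{2k}(\Delta)\cong \tilde H_{2k-1}(\Lk_v\Delta)$ with $\Lk_v\Delta$ not an induced subcomplex, so no induction hypothesis applies to it) is precisely the difficulty, and restating it in the dual complex does not remove it. So there is a genuine gap: the technical core is missing.

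The idea you need, and the one the paper uses, is to organize the induction around \emph{missing faces} rather than vertices. Given a $j$-cycle $\gamma$ with $j\ge 2k$, take a missing face $\sigma$ of maximal dimension supported in $\gamma$ and set $\tau=\sigma\setminus\{v\}$ for a vertex $v\in\sigma$; then $\tau$ is a genuine face with $|\tau|\le k$ (here hypothesis (2) enters), so the link cycle $\Lk_\tau\gamma$ has dimension $j-|\tau|\in[k,\,j)$ and lives \emph{inside $\Delta$ itself}, and the induction on dimension (started by hypothesis (1) on the window $[k,2k-1]$) hands you a resolution $r$ of it directly --- no transfer of hypotheses to a non-induced link is needed. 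Replacing $\gamma$ by $\gamma-\gamma_{|\St_\tau\Delta}\pm\partial\tau*r$ then either kills $\gamma$ outright or strictly decreases the lexicographic invariant $\mu(\gamma)$ (maximal dimension of a supported missing face, together with the number of such faces), and hypothesis (2) is used a second time to guarantee that the correcting chain $\tau*r$ is a chain of $\Delta$. This local surgery at missing faces is what replaces your unexecuted bootstrap; without it, or an equivalent device, the argument is incomplete.
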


For $k=1$, this recovers the implication $(1)\Rightarrow(3)$ of Theorem \ref{thm:folkloreChord} for chordal graph theory. Analogs of the implications $(3)\Rightarrow(4)\Rightarrow(1)$ for complexes also hold and follow from known results; see Theorem \ref{thm:prp_exterior} below for details.
Theorem \ref{mthm:persist} is tight, see Proposition \ref{prop:tightPropagation}. In particular, decomposition $k$-chordality is not enough to guarantee higher decomposition chordality unless $k=1$ (or the trivial case $k=0$).
Part (2) of Theorem \ref{thm:folkloreChord} does not generalize under the assumptions of Theorem \ref{mthm:persist}, as the homotopy groups of $\Delta$ can be nontrivial in any dimension.

In Section \ref{sec:notation} we set notation, in Section \ref{sec:defRes} we
define \emph{resolution} chordality and relate it to decomposition chordality.
In Section \ref{sec:basics} we study fundamental properties of decomposition and resolution chordality with respect to basic operations on complexes, such as cone, join, link and
generalizations of Dirac's Gluing Lemma (cf.\ Lemma \ref{lem:gluing}).
In Section \ref{sec:Propagation} we prove the Propagation Theorem \ref{mthm:persist} and its consequences for regularity of the associated Stanley-Riesner ideal.
In Section \ref{sec:Dirac} we define a combinatorial analog of Dirac's elimination order for higher dimensions; this notion implies decomposition chordality. We also establish analogs of Dirac's Minimal Cut theorem for decomposition chordality.

\section{Basic notation.}\label{sec:notation}
Throughout, we allow any simplicial complex to be a \Defn{relative simplicial complex}, i.e.\ a pair of \Defn{abstract simplicial complexes} $\Psi=(\Delta, \Gamma)$, $\Gamma \subset \Delta$ (where an abstract simplicial complex is a downclosed subset of the powerset $2^{S}$ for some finite set $S$), cf. \cite{Stanley96}, \cite{AS}.
A $k$-dimensional simplicial complex is complete if it coincides with the $k$-skeleton of some simplex.

The \Defn{deletion} $\K-\sigma$ of a face $\sigma$ of $\K$ is the maximal subcomplex of $\K$ that does not contain $\sigma$; this is naturally extended to deletion of a collection of faces from $\K$.
For a subset $V$ of the vertices of $\K$, let $\K_{|V}$ denote the induced subcomplex of $\K$ on $V$.

Consider two simplicial complexes $\Delta_1$ and $\Delta_2$ whose vertex sets are disjoint. The \Defn{join} $\Delta_1*\Delta_2$ is the simplicial complex whose faces are of the form $F_1\cup F_2$ where $F_1 \in \Delta_1$ and $F_2 \in \Delta_2$.

Now, let $I\subset \{-1,0,1,2,\ldots\}$ denote any subset, and let $\K$ denote any simplicial complex. We then denote by $\K^{(I)}$ the collection of faces $\sigma \in \K$ with $\dim \sigma \in I$. As a special case, we obtain the
\Defn{$k$-skeleton} $\Sk_k \K:=\K^{(\le k)}$ of $\K$, and the \Defn{collection of $k$-faces} $\F_{k}\, := \, \K^{(k)}$.

A \Defn{$k$-clique} is a pure simplicial complex of dimension $k$ that contains all possible faces of dimension $\le k$ on its vertex set. With this, one can associate to any simplicial complex $\K$ its \Defn{k-cliques complex} (which contains $\K$), defined as \[\Cl_k \K\, :=\, \{\sigma\subset \F_0(\K): \Sk_k \sigma \subseteq \K \}.\]

Recall that the (closed) \Defn{star} and \Defn{link} of a face $\sigma$ in $\K$ are
the subcomplexes \[\St_\sigma \K:= \cup_{\sigma\subseteq\tau\in K}2^{\tau}\ \
\text{and}\ \ \Lk_\sigma \K:= \{\tau\setminus \sigma: \sigma\subset
\tau\in \K\}.\]
Notice that the previous notation is not standard, but it is convenient for the purpose of the paper. The \Defn{extended link} of a face $\sigma \in \K$ is the complex
\[\eLk_\sigma \K:=\St_\sigma \K-\sigma \cong \partial \sigma * \Lk_\sigma \Delta\]
where $\partial \sigma$ is the complex of proper faces of $\sigma$. Note that $\Lk_\sigma\subseteq \eLk_\sigma$ and if $\sigma=v$ is a vertex of $\K$, then $\Lk_v= \eLk_v$. In general, the dimension of $\eLk_\sigma \K$ is one less than the dimension of $\St_\sigma\K$, while the dimension of $\Lk _\sigma\K$ is the dimension of $\St_\sigma \K$ minus the cardinality of $\sigma$.

A \Defn{nonface} of $\K$ is, naturally, a simplex on $\K^{(0)}$ that is not a face of
$\K$. A \Defn{minimal nonface}, or \Defn{missing face}, of $\K$ is an
inclusion minimal nonface of $\K$. Equivalently, a simplex $\sigma$ is a
missing face of $\K$ if and only if $\partial \sigma\subset \K$, but $\sigma
\not\in \K$.

In simplicial homology, a total order on the vertex set $V$ is fixed.
We denote by $C_k(\Delta, R)$ the space of $k$-chains with coefficients in $R$, that is, the free $R$ module spanned by \emph{ordered} $k$-dimensional faces of $\Delta$. $C_{-1}(\Delta, R)$ is free of rank $1$, generated by the empty face. The vertex support of a $k$-chain $c$ is denoted by $c^{(0)}$, and we say that a face $\sigma\in\K$ is \Defn{supported} in $c$ if $\sigma$ is contained in a face with nonzero coefficient in $c$. Furthermore, for a $k$-chain $c$ and a subcomplex $\Gamma$ of $\K$, $c_{|\Gamma}$ denotes the restriction of $c$ to the summands supported in $\Gamma$.
The simplicial \Defn{boundary map} of $C_k(\Delta, R)$ is denoted by  $\partial$.
A $k$-chain $c$ with $\partial c=0$ is called a \Defn{$k$-cycle}. A $k$-cycle $z$ is \Defn{complete} if the faces with non-zero coefficients are the facets of the boundary of a $(k+1)$-simplex.

For ordered sets $\tau\subseteq\sigma\subseteq V$, let $\text{sgn}(\tau,\sigma)$ be $1$ or $-1$ depending on whether the permutation $\sigma\mapsto (\tau,\sigma - \tau)$ has an even or odd number of inversions, respectively. The boundary map $\partial$ applied to a face $\sigma$ viewed as a $k$-chain can be expressed as $\sum_{\sigma'} \text{sgn}(\sigma', \sigma) \sigma'$, where $\sigma'$ ranges over all $(k-1)$-dimensional faces of $\sigma$.

Notice that if the vertices of the join $\Delta_1*\Delta_2$ are ordered such that the vertices of $\Delta_1$ come before those of $\Delta_2$, then
the boundary operation on the join is given by
$\partial(\sigma_1*\sigma_2) = \partial(\sigma_1)* \sigma_2 + (-1)^{j_1+1}\sigma_1*\partial(\sigma_2)$, where $\sigma_1\in\Delta_1$ is $j$-dimensional and $\sigma_2\in\Delta_2$, and $\partial$ extends linearly to all chains.

We say that a simplicial complex is \Defn{$k$-Leray} if the reduced homology  $\tilde{H}_j(\Delta_{|V}) = 0$ for every $j\ge k$ and every subset $V$ of vertices of $\K$.

Let $W$ denote the vertex set of $\Delta$  and $\mathbb{F}$ a field. Consider the polynomial ring $S:=\mathbb{F}[x_w \, | \, w\in W]$. Let $I_{\K}$ be the ideal generated by the square free monomials indexed by the subsets of $W$ that are not faces of $\K$. The \Defn{Stanley Reisner ring} of $\K$ is the ring $\mathbb{F}[\K] := S/I_{\K}$. All resolutions of $\mathbb{F}[\K]$ considered in this paper are $\mathbb{Z}$-graded free resolutions of $S$-modules.

\section{Resolution and decomposition of cycles.}\label{sec:defRes}
Let $G$ denote a chordal graph, and let $\wt{G}=\Cl_1 G$ denote the complex induced by its $1$-cliques. Then, if $z$ is any $1$-cycle in $\wt{G}$, there exists a $2$-chain $c$ with $\partial c=z$ \emph{and} $c^{(0)}=z^{(0)}$. Equivalently, $z$ can be written as a sum of $1$-cycles of length $3$ that contain no vertices that are not already vertices of $z$.
This gives rise to two simple notions of higher chordality:

We say that a $(k+1)$-chain $c\in C_{k+1}(\K)$ is a \Defn{resolution} of a $k$-cycle $z\in Z_{k}(\K)$ if $c^{(0)}= z^{(0)}$ and $\partial c=z$.  We say that a (relative) complex in which every $k$-cycle admits a resolution is \Defn{$\cdr_k$-chordal}, or \Defn{resolution $k$-chordal}. We say that $\K$ is \Defn{$\cdd_k$-chordal}, or \Defn{decomposition $k$-chordal}, if every $k$-cycle $z$ can be written as a sum of complete $k$-cycles $(z_i)$ in such a way that $z_i^{(0)}\subset  z^{(0)}$ for all $z_i$. The $k$-th reduced homology group is denoted by $\tilde {H}_k(\Delta, R)$.

Let us observe some straightforward properties of resolution and decomposition chordality:
\begin{fcts}[Resolution and decomposition chordality]
\label{fact:Res-Dec-Chordality}
\mbox{}
\begin{compactenum}[\rm (1)]
\item A graph is $\cdd_1$-chordal if and only if it is chordal in the classical sense (regardless of the ring of coefficients).
\item A complex $\K$ is $\cdd_k$-chordal if and only if $\Cl_k \K$ is $\cdr_k$-chordal.
\item Any $\cdr_k$-chordal complex is $\cdd_k$-chordal.
\item A simplicial complex $\K$ is resolution $k$-chordal if and only if for every subset of vertices $V\subset \K^{(0)}$, we have in reduced homology $\tilde{H}_k(\K_{|V})\ =\ 0$. (Reduced homology is used to include the case when $k=0$.)
\item In particular, the simplex is resolution $k$-chordal for all $k$.
\item Resolution $k$-chordality only depends on the faces of dimension $k$ and $(k+1)$ of a simplicial complex, i.e., for two simplicial complexes with the same set of $k$- and $(k+1)$-faces, either both are $\cdr_k$-chordal or both are not $\cdr_k$-chordal.
\end{compactenum}
\end{fcts}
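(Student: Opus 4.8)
The plan is to establish the six items of Facts \ref{fact:Res-Dec-Chordality} essentially in the order listed, since several rely on the earlier ones. For (1), I would simply unwind definitions: a $1$-cycle $z$ in a graph $G$ (viewed as a $1$-dimensional complex) is, up to rational sign conventions, a union of simple cycles, and a complete $1$-cycle is a triangle (the boundary of a $2$-simplex), which on the level of the graph means a set of three pairwise adjacent vertices. One needs to check that "every $1$-cycle decomposes into triangles supported on its own vertices (over every coefficient ring)" is equivalent to the classical chordality condition that every induced cycle of length $\ge 4$ has a chord. The forward direction: if $G$ is chordal, induct on cycle length using a chord to split a long cycle into two shorter ones on the same vertex set, reaching triangles; the subtle point here is to make the argument work over an arbitrary ring, which is fine since one only ever adds/subtracts cycles with $\pm 1$ coefficients and never divides. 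The reverse direction: a chordless $\ge 4$-cycle $z$ cannot be written as a sum of triangles on $z^{(0)}$ because no triangle is even present on those vertices.

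For (2), observe that by definition $\Cl_k \K$ has exactly the same $k$-faces as $\K$ (adding a $k$-face to $\Cl_k\K$ requires its boundary, i.e.\ lower skeleton, already in $\K$, which is automatic), hence the same $k$-cycles; and a complete $k$-cycle supported on a vertex set $V$ lies in $\Cl_k\K$ iff the full $k$-skeleton of the corresponding $(k+1)$-simplex lies in $\K$, i.e.\ iff $V$ spans a $(k+1)$-clique in $\Cl_k\K$, i.e.\ iff that complete cycle is the boundary of a genuine $(k+1)$-face of $\Cl_k\K$. So "$z$ decomposes into complete cycles on $z^{(0)}$ inside $\K$" becomes "$z$ bounds a sum of $(k+1)$-faces of $\Cl_k\K$ on $z^{(0)}$", which is exactly the resolution property for $\Cl_k\K$; one must also note $z^{(0)}$-containment is preserved in both translations, which is immediate. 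Item (3) is then the trivial containment $\K \subseteq \Cl_k\K$ combined with (2): a resolution in $\K$ is in particular a resolution in $\Cl_k\K$, so $\K$ $\cdr_k$-chordal $\Rightarrow$ $\Cl_k\K$ $\cdr_k$-chordal $\Rightarrow$ (by (2)) $\K$ $\cdd_k$-chordal.

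The genuinely content-bearing item is (4), and I expect it to be the main obstacle. One direction is easy: if $\tilde H_k(\K_{|V}) = 0$ for all $V$, take any $k$-cycle $z$, set $V = z^{(0)}$, note $z$ is a cycle in $\K_{|V}$, so by vanishing homology $z = \partial c$ for some $c \in C_{k+1}(\K_{|V})$, and automatically $c^{(0)} \subseteq V = z^{(0)}$ — but we need equality $c^{(0)} = z^{(0)}$; since $\partial c = z$ forces every vertex of $z$ to appear in $c$, and $c^{(0)}\subseteq z^{(0)}$, equality holds, giving a resolution. Conversely, suppose $\K$ is $\cdr_k$-chordal and fix $V$; take a $k$-cycle $z$ in $\K_{|V}$ — this is also a $k$-cycle in $\K$, so it has a resolution $c$ with $c^{(0)} = z^{(0)} \subseteq V$, hence $c \in C_{k+1}(\K_{|V})$ and $\partial c = z$, so $z$ is a boundary in $\K_{|V}$; thus $\tilde H_k(\K_{|V}) = 0$. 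The one wrinkle, flagged in the statement, is the case $k=0$: a $0$-chain is a cycle iff its coefficients sum to zero (reduced homology / augmentation), a complete $0$-cycle is a difference $v - w$ of two vertices, and resolution by a $1$-chain is connectivity of the relevant vertex set; here one must be careful that a single-vertex induced subcomplex has vanishing reduced $\tilde H_0$ (it does), and that "$c^{(0)} = z^{(0)}$" is compatible with $z$ possibly being the zero cycle (then $z^{(0)} = \emptyset$ and $c = 0$ works). Item (5) is immediate from (4) since every induced subcomplex of a simplex is a simplex, which is contractible, so all reduced homology vanishes. Finally (6) follows because, by (4), $\cdr_k$-chordality is equivalent to vanishing of $\tilde H_k(\K_{|V})$ for all $V$, and $\tilde H_k$ of any complex depends only on $C_{k-1}, C_k, C_{k+1}$ and the boundary maps among them — but actually only on the chain groups in degrees $k$ and $k+1$ and the map $\partial_{k+1}$ together with $\ker \partial_k$; and $\ker\partial_k$ inside $C_k$ is determined by which $k$-faces are present (the formula for $\partial$ on a $k$-face only records its $(k-1)$-faces, which exist automatically once the $k$-face does), so two complexes with the same $k$- and $(k+1)$-faces have literally the same $C_k$, $C_{k+1}$, $\partial_{k+1}$, hence the same $Z_k$ and $B_k$, hence the same $\tilde H_k(\K_{|V})$ for every $V$; I should double-check the degenerate boundary cases ($k=0$, where $\partial_0$ is the augmentation onto $C_{-1}$, which is the same rank-$1$ module for both complexes) but these cause no trouble.
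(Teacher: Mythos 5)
Your proposal is correct. The paper records these six items as unproved ``straightforward properties,'' so there is no proof of record to compare against; your verifications are exactly the intended ones (definition-unwinding for (1)--(3), the restriction-to-$\K_{|z^{(0)}}$ argument for (4), and (5)--(6) as corollaries of (4)). The only point deserving a word of extra care is the forward direction of (1) over an arbitrary ring: to write a general $1$-cycle $z$ as an $R$-combination of simple cycles supported on $z^{(0)}$ one should pass to the subgraph of edges in the support of $z$ and use its fundamental-cycle basis, after which your chord-splitting induction (with $\pm1$ coefficients only) finishes the job as you describe.
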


In higher dimension, we find it more useful to discuss resolution $k$-chordality than decomposition $k$-chordality, however, as we have seen, they are equivalent for $k$-clique complexes.

\section{Basic operations and chordality of complexes}\label{sec:basics}
An important observation is the following \Defn{cone lemma}. Note that on the level of simplicial chains, we have a natural map for any vertex $v\in\K^{(0)}$
\begin{align*}
\lk_v\ :\ C_k(\K)\ \ &\longrightarrow\ \ C_{k-1}(\Lk_v \K) \\
\sum_{\sigma \in \K^{(k)}} g_\sigma\cdot \sigma\ \ &\longmapsto\ \ \sum_{\sigma \in \K^{(k)}}
g_\sigma\cdot\text{sgn} (v,\sigma)\cdot \Lk_v \sigma = \sum_{\sigma \in \K^{(k)}:\ v\in\sigma}
g_\sigma\cdot\text{sgn} (v,\sigma)\cdot (\sigma-v)
\end{align*}
that descends to a quasiisomorphism of chain complexes
\begin{equation}\label{eq:cone_lem}
C_\bullet(\St_v \K, \St_v \K-v)\cong C_{\bullet -1}(\Lk_v \K).
\end{equation}
In particular, we have:
\begin{lem}[Cone Lemma]\label{lem:cone} A relative simplicial complex $(\St_v \K, \St_v \K-v)$ is resolution $k$-chordal if and only if $\Lk_v \K$ is resolution $(k-1)$-chordal.\qed
\end{lem}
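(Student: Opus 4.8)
The plan is to identify the homology of the relative complex $(\St_v \K, \St_v \K - v)$ with a shift of the homology of $\Lk_v \K$, and then translate through the homological characterization of resolution chordality given in Facts~\ref{fact:Res-Dec-Chordality}(4). The key input is the displayed quasiisomorphism \eqref{eq:cone_lem}, namely $C_\bullet(\St_v \K, \St_v \K - v) \cong C_{\bullet - 1}(\Lk_v \K)$, which induces isomorphisms $\tilde{H}_j(\St_v \K, \St_v \K - v) \cong \tilde{H}_{j-1}(\Lk_v \K)$ for all $j$. So I would first record this, either taking \eqref{eq:cone_lem} as given (it is stated just above the lemma) or sketching the standard verification that the map $\lk_v$ is a chain map commuting with boundaries up to the sign bookkeeping already fixed in Section~\ref{sec:notation}, with the empty face of $\Lk_v \K$ accounting for the degree-$(-1)$ term so that reduced homology matches on the nose.

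Next I would bring in the notion of resolution chordality for relative complexes. The subtlety is that Facts~\ref{fact:Res-Dec-Chordality}(4) is stated for an (absolute) simplicial complex $\K$ in terms of induced subcomplexes $\K_{|V}$, whereas here we have a relative pair. I would use the definition directly: $(\St_v \K, \St_v \K - v)$ is resolution $k$-chordal iff every $k$-cycle of the relative chain complex admits a resolution, i.e.\ iff $\tilde{H}_k$ of the relative complex vanishes together with the analogous vanishing after restricting to any vertex subset $V$ — but restriction to $V$ on both complexes in the pair commutes with taking stars and links of $v$ (when $v \in V$; when $v \notin V$ the relative complex restricts to the empty pair, which is trivially chordal). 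Concretely, for $V$ containing $v$, $(\St_v \K, \St_v \K - v)_{|V} = (\St_v (\K_{|V}), \St_v(\K_{|V}) - v)$ and $\Lk_v(\K_{|V}) = (\Lk_v \K)_{|V \setminus v}$, so the quasiisomorphism is natural in $V$.

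Assembling: $(\St_v \K, \St_v \K - v)$ is resolution $k$-chordal $\iff$ $\tilde{H}_k\big((\St_v \K, \St_v \K - v)_{|V}\big) = 0$ for all vertex sets $V$ $\iff$ $\tilde{H}_{k-1}\big((\Lk_v \K)_{|V \setminus v}\big) = 0$ for all such $V$ $\iff$ $\tilde{H}_{k-1}\big((\Lk_v \K)_{|W}\big) = 0$ for all vertex sets $W$ of $\Lk_v \K$ $\iff$ $\Lk_v \K$ is resolution $(k-1)$-chordal, where the middle step is exactly \eqref{eq:cone_lem} applied to each restriction and the last step is Facts~\ref{fact:Res-Dec-Chordality}(4).

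The main obstacle I anticipate is purely bookkeeping rather than conceptual: making sure the relative version of the homological criterion (Facts~\ref{fact:Res-Dec-Chordality}(4)) is legitimately available for pairs, and that restriction to a vertex subset interacts correctly with the relative structure of $(\St_v \K, \St_v \K - v)$ — in particular handling the case $v \notin V$ and confirming that "resolution $k$-chordal" for a relative complex is indeed equivalent to vanishing of $\tilde{H}_k$ of all vertex-induced subpairs. Once that is pinned down, the proof is a one-line consequence of \eqref{eq:cone_lem}, which is why the lemma is stated with a \qed and no written proof.
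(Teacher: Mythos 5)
Your proposal is correct and follows exactly the route the paper intends: the lemma is stated with a \qed precisely because it is an immediate consequence of the quasiisomorphism \eqref{eq:cone_lem}, combined with the homological reformulation of resolution chordality from Facts~\ref{fact:Res-Dec-Chordality}(4) applied to each vertex-induced subpair. Your extra bookkeeping (naturality of the identification in $V$, the identities $(\St_v \K)_{|V}=\St_v(\K_{|V})$ and $\Lk_v(\K_{|V})=(\Lk_v\K)_{|V\setminus v}$ for $v\in V$, and the trivial case $v\notin V$) is accurate and simply makes explicit what the paper leaves to the reader.
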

On the other hand, as we shall see, this simple lemma is the main ingredient for any simplicial homology theory to satisfy the propagation of chordality property.

More generally, for any face $\tau\in\K$, the extended-link map
\begin{align*}
\lk_{\tau}\ :\ C_k(\K)\ \ &\longrightarrow\ \ C_{k-1}(\eLk_{\tau} \K) \\
\sum_{\sigma \in \K^{(k)}} g_\sigma\cdot \sigma\ \ &\longmapsto\ \ \sum_{\sigma \in \K^{(k)}}
g_\sigma\cdot \eLk_v \sigma = \sum_{\sigma \in \K^{(k)}:\ \tau\subseteq\sigma}
g_\sigma\cdot\text{sgn}(\tau,\sigma)\cdot (\partial\tau * (\sigma-\tau))
\end{align*}
descends to an isomorphism in homology
\[H_k(\St_{\tau} \K, \partial\St_{\tau}\K)\cong H_{k -1}(\eLk_{\tau} \K),\]
and thus,
\begin{lem}[Extended Cone Lemma]\label{lem:econe}
The relative complex $(\St_{\tau} \K, \partial\St_{\tau} \K)$ is resolution $k$-chordal if and only if $\eLk_{\tau} \K$ is resolution $(k-1)$-chordal.
\end{lem}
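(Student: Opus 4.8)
The strategy is to first establish the displayed isomorphism $H_k(\St_\tau\K,\partial\St_\tau\K)\cong H_{k-1}(\eLk_\tau\K)$ as a natural isomorphism of graded groups, and then observe that resolution $k$-chordality of $(\St_\tau\K,\partial\St_\tau\K)$ is exactly the vanishing of $H_k$ of every induced subpair, which under the isomorphism translates to the vanishing of $H_{k-1}$ of every induced subcomplex of $\eLk_\tau\K$, i.e.\ resolution $(k-1)$-chordality by Facts~\ref{fact:Res-Dec-Chordality}(4). Concretely, I would first check that the extended-link map $\lk_\tau$ defined above is a well-defined chain map $C_\bullet(\St_\tau\K)\to C_{\bullet-1}(\eLk_\tau\K)$ that kills $C_\bullet(\partial\St_\tau\K)$, hence factors through a chain map $C_\bullet(\St_\tau\K,\partial\St_\tau\K)\to C_{\bullet-1}(\eLk_\tau\K)$; the sign-compatibility with $\partial$ is the routine calculation one gets from the join boundary formula $\partial(\sigma_1*\sigma_2)=\partial\sigma_1*\sigma_2+(-1)^{j_1+1}\sigma_1*\partial\sigma_2$ applied to $\partial\tau*(\sigma-\tau)$, combined with the identity $\partial\partial\tau=0$.

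Next I would argue this factored map is a quasi-isomorphism. The cleanest route is to mimic the Cone Lemma~\ref{lem:cone}: write $\St_\tau\K\cong \tau * \Lk_\tau\K$ (as a cone over $\Lk_\tau\K$ with apex the simplex $\tau$), so that $\partial\St_\tau\K$ corresponds to $\partial(\tau*\Lk_\tau\K)=\partial\tau*\Lk_\tau\K\ \cup\ \tau*\partial(\Lk_\tau\K)$, and the quotient pair $(\tau*\Lk_\tau\K,\ \partial\tau*\Lk_\tau\K)$ has the same relative chain complex (up to a degree shift and sign) as $\partial\tau*(\Lk_\tau\K)$-coned structure. In fact $\eLk_\tau\K=\partial\tau*\Lk_\tau\K$ by the definition recalled in Section~\ref{sec:notation}, and the join with $\partial\tau$ — a sphere of dimension $|\tau|-2$ — suspends homology by $|\tau|-1$ degrees; but the degree bookkeeping already built into the definition of $\lk_\tau$ (it lowers degree by exactly one, not by $|\tau|$) means that the correct statement is the stated one-degree shift, which is verified by a direct spectral-sequence or mapping-cone argument on the short exact sequence $0\to C_\bullet(\partial\St_\tau\K)\to C_\bullet(\St_\tau\K)\to C_\bullet(\St_\tau\K,\partial\St_\tau\K)\to 0$ together with the contractibility of $\St_\tau\K$ (it is a cone). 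Since $\St_\tau\K$ is acyclic, the long exact sequence of the pair gives $H_k(\St_\tau\K,\partial\St_\tau\K)\cong \tilde H_{k-1}(\partial\St_\tau\K)$, and one identifies $\partial\St_\tau\K$ with $\eLk_\tau\K=\partial\tau*\Lk_\tau\K$ directly from the definition $\eLk_\tau\K=\St_\tau\K-\tau$.

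Finally I would upgrade this to the induced-subcomplex statement needed for chordality. For a vertex subset $U$ of $\St_\tau\K$, restricting to $U$ commutes with all the constructions provided $U\supseteq\tau$ (if $U\not\supseteq\tau$ then $\St_\tau\K_{|U}$ no longer contains $\tau$ and the relative pair degenerates, but then $\eLk_\tau\K_{|U}$ is also correspondingly truncated, and one checks the two sides vanish or not in tandem — this is the bookkeeping one must be careful with). So $(\St_\tau\K,\partial\St_\tau\K)$ is resolution $k$-chordal $\iff$ $H_k\big((\St_\tau\K)_{|U},(\partial\St_\tau\K)_{|U}\big)=0$ for all $U$ $\iff$ $\tilde H_{k-1}\big((\eLk_\tau\K)_{|U}\big)=0$ for all $U$ $\iff$ $\eLk_\tau\K$ is resolution $(k-1)$-chordal, using Facts~\ref{fact:Res-Dec-Chordality}(4). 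The main obstacle I anticipate is precisely the degree/sign bookkeeping around $\partial\tau$: one must confirm that working with the relative pair $(\St_\tau\K,\partial\St_\tau\K)$ rather than $(\St_\tau\K,\St_\tau\K-\tau$ as in the ordinary Cone Lemma) shifts homological degree by exactly one and not by $|\tau|$, and that the induced-subcomplex restriction genuinely interacts well with the boundary-of-star operation; handling the edge case $U\not\supseteq\tau$ carefully is where a naive argument could go wrong.
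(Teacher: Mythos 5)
Your overall strategy --- identify $\partial\St_\tau\K$ with $\eLk_\tau\K=\St_\tau\K-\tau$, get the degree-one shift $H_k(\St_\tau\K,\partial\St_\tau\K)\cong\tilde H_{k-1}(\eLk_\tau\K)$ from the long exact sequence of the pair and contractibility of the star, and then translate chordality via induced subcomplexes --- is consistent with the paper, which in fact offers no proof at all beyond asserting that the map $\lk_\tau$ descends to this isomorphism. Your LES route is a perfectly good substitute for the explicit chain map, and your treatment of the case $U\supseteq\tau$ is correct: restriction commutes with taking the star and its ``boundary'', the restricted star is still a cone, and the isomorphism persists, which gives the ``if'' direction (chordality of $\eLk_\tau\K$ implies chordality of the pair) cleanly.

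The genuine gap is exactly at the point you flag and then wave away: the claim that for $U\not\supseteq\tau$ ``the two sides vanish or not in tandem'' is false, and this breaks the ``only if'' direction. If $U\cap\tau=\emptyset$, then $\bigl((\St_\tau\K)_{|U},(\partial\St_\tau\K)_{|U}\bigr)$ is the zero relative complex (no face over $U$ contains $\tau$), so its $H_k$ vanishes for free; but $(\eLk_\tau\K)_{|U}=(\Lk_\tau\K)_{|U}$ can carry nontrivial $\tilde H_{k-1}$. Concretely, take $k=2$, $\tau=\{a,b\}$ an edge, and $\K=\tau\ast\Sk_1\sigma$ for a $3$-simplex $\sigma$: every induced subgraph of $\Sk_1\sigma$ is connected, so every relative $2$-cycle $\tau\ast z'$ of the pair is resolved by $\tau\ast c'$ with $\partial c'=z'$, i.e.\ the pair is resolution $2$-chordal; yet $\eLk_\tau\K=\partial\tau\ast\Sk_1\sigma$ contains an induced empty triangle (three vertices of $\sigma$), so it is not resolution $1$-chordal. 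In general, resolution $(k-1)$-chordality of $\eLk_\tau\K=\partial\tau\ast\Lk_\tau\K$ demands the vanishing of $\tilde H_{k-1}$ of restrictions $(\Lk_\tau\K)_{|W}$ (the $U$ with $U\cap\tau=\emptyset$) \emph{in addition to} the vanishing of $\tilde H_{k-|\tau|}$ of such restrictions (the $U\supseteq\tau$), and only the latter is controlled by the relative pair; for $|\tau|=1$ these coincide and the lemma reduces to the ordinary Cone Lemma, but for $|\tau|\ge 2$ they do not. So your argument, as written, only establishes one implication; note that this is also the only implication the paper actually uses (in Proposition \ref{prp:dirac_to_chordal}), and that closing the other direction would require either additional hypotheses or a different reading of the statement.
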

We conclude with the following result for the links:
\begin{lem}[Links Lemma]\label{lem:link}
Let $\K$ be any simplicial complex, and let $F$ denote any
$\ell$-dimensional face of $\K$. If $\K$ is resolution $k$-, $(k-1)$-, $\cdots$ $(k-\ell-1)$-chordal, then $\Lk_F \K$ is resolution $(k-\ell-1)$-chordal.
\end{lem}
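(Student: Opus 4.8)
The plan is to prove this by induction on $\ell$, with the Cone Lemma (Lemma~\ref{lem:cone}) providing the key reduction. Recall that $\Lk_F \K = \Lk_v(\Lk_{F'}\K)$ whenever $F = F' \cup \{v\}$ with $v$ a vertex not in $F'$; so it is natural to peel off one vertex of $F$ at a time. The base case $\ell = -1$ (i.e.\ $F = \emptyset$, $\Lk_\emptyset \K = \K$) is trivial, and the base case $\ell = 0$ (i.e.\ $F = \{v\}$ a vertex) is where the Cone Lemma does the essential work, so I would actually set up the induction to reduce the case of an $\ell$-face to the case of an $(\ell-1)$-face by removing a single vertex.

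The induction step would go as follows. Write $F = F' \cup \{v\}$ with $\dim F' = \ell - 1$ and $v \notin F'$. By the inductive hypothesis applied to $F'$ (using that $\K$ is resolution $j$-chordal for $k \ge j \ge k - \ell - 1 \ge k - \ell$), we learn that $\Lk_{F'}\K$ is resolution $j$-chordal for all $j$ with $k - \ell - 1 \le j \le k - \ell$; wait — more carefully, I need the inductive hypothesis to deliver a \emph{range} of chordality for $\Lk_{F'}\K$, not a single degree, so that I can then apply the Cone Lemma at $v$. So the correct formulation to induct on is the statement: if $\K$ is resolution $k$-, $(k-1)$-, $\ldots$, $(k-m)$-chordal, then $\Lk_G\K$ is resolution $(k-|G|)$-, $(k-|G|-1)$-, $\ldots$, $(k-m)$-chordal for every face $G$ with $|G| \le m+1$. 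The statement of the lemma is the special case $|G| = \ell + 1$, $m = \ell + 1$, giving resolution $(k - \ell - 1)$-chordality of $\Lk_F\K$.

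For the induction step of this strengthened statement: given $\K$ resolution $k$-, \ldots, $(k-m)$-chordal and a face $G = G' \cup \{v\}$, apply the inductive hypothesis (for the smaller face $G'$) to conclude $\Lk_{G'}\K$ is resolution $(k - |G'|)$-, \ldots, $(k-m)$-chordal, i.e.\ resolution $j$-chordal for $k - |G| + 1 \le j \le k - m$. Now I want to pass to $\Lk_v(\Lk_{G'}\K) = \Lk_G \K$. The Cone Lemma, applied inside $\Lk_{G'}\K$, says $(\St_v \Lk_{G'}\K,\ \St_v\Lk_{G'}\K - v)$ is resolution $(j+1)$-chordal iff $\Lk_v\Lk_{G'}\K$ is resolution $j$-chordal. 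But to run this I must relate resolution chordality of the \emph{relative} star pair to resolution chordality of $\Lk_{G'}\K$ itself. This is exactly the point where a long-exact-sequence argument (or the fact that resolution $k$-chordality is equivalent to vanishing of $\tilde H_k(\cdot_{|V})$ for all vertex subsets $V$, by Facts~\ref{fact:Res-Dec-Chordality}(4)) enters: one uses the long exact homology sequence of the pair $(\St_v \Lambda, \St_v\Lambda - v)$ for $\Lambda = \Lk_{G'}\K$ (and for every induced subcomplex thereof, to handle all vertex subsets simultaneously), together with the contractibility of $\St_v\Lambda$, to deduce that $\St_v\Lambda - v$ — hence its homology, which agrees with that of the pair shifted — is controlled by $\Lambda$. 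I expect this homological bookkeeping — tracking which degrees survive through the long exact sequence and confirming that the range of chordality shrinks by exactly one when passing from $\Lk_{G'}\K$ to $\Lk_G\K$, uniformly over all induced subcomplexes — to be the main obstacle; the rest is formal. Once that is in place, chaining the equivalences gives that $\Lk_G\K$ is resolution $j$-chordal for $k - |G| \le j \le k - m$, completing the induction, and the lemma follows.
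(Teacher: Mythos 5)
Your reduction to the vertex case, together with the strengthened inductive statement that tracks a whole \emph{range} of chordality degrees through the iterated links, is sound and in fact makes explicit a bookkeeping point that the paper's own reduction ($\Lk_F\K=\Lk_{v_1}(\Lk_{v_2}\cdots)$) leaves implicit. However, the heart of the lemma is precisely the vertex case, and you do not prove it: you defer the passage from chordality of $\Lambda=\Lk_{G'}\K$ to chordality of $\Lk_v\Lambda$ to an unspecified long-exact-sequence argument and label it ``the main obstacle; the rest is formal.'' That is the one step requiring an actual argument, so as written the proposal has a genuine gap. The gap is closable along the lines you sketch, and more cleanly than via the relative pair: by Facts~\ref{fact:Res-Dec-Chordality}(4) it suffices to show $\tilde{H}_{j-1}\big((\Lk_v\K)_{|W}\big)=0$ for every vertex set $W$ of $\Lk_v\K$, assuming $\K$ is resolution $j$- and $(j-1)$-chordal. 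Put $\Lambda:=\K_{|W\cup\{v\}}$, so that $(\Lk_v\K)_{|W}=\Lk_v\Lambda=\St_v\Lambda\cap\Lambda_{|W}$ and $\St_v\Lambda\cup\Lambda_{|W}=\Lambda$; Mayer--Vietoris gives
\[
\tilde{H}_{j}(\Lambda)\ \longrightarrow\ \tilde{H}_{j-1}(\Lk_v\Lambda)\ \longrightarrow\ \tilde{H}_{j-1}(\St_v\Lambda)\oplus\tilde{H}_{j-1}(\Lambda_{|W}),
\]
and the outer terms vanish (by $j$- and $(j-1)$-chordality of $\K$ and contractibility of the star), forcing the middle one to vanish. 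Note that routing instead through resolution chordality of the pair $(\St_v\Lambda,\St_v\Lambda-v)$ and the Cone Lemma, as you propose, carries an extra burden: resolution chordality of a relative complex is defined via vertex supports of chains, not merely via homology, so that translation would itself need justification.

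For comparison, the paper avoids exact sequences altogether and argues at the chain level: given a $(k-1)$-cycle $z$ in $\Lk_v\K$, it cones it to $a=\lk_v^{-1}z$ in $\St_v\K$, takes a resolution $b$ of $z$ in $\K$ (this is where $(k-1)$-chordality of $\K$ enters), resolves the resulting $k$-cycle $a-b$ in $\K$ by $k$-chordality, and pushes the resolving chain back into $\Lk_v\K$ with the map $\lk_v$. This explicit construction controls vertex supports directly and is the form of the argument that survives in the axiomatic setting of Section~5; your homological route, once the Mayer--Vietoris step above is supplied, proves the same statement but leans on Facts~\ref{fact:Res-Dec-Chordality}(4).
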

\begin{proof}
Recall that if $F = \{v_1, \dots v_k)$, then $\Lk_F(\Delta)= \Lk_{v_1}(\Lk_{v_2}\dots \Lk_{v_k}(\Delta)\dots )$, and so it suffices to treat the case when $F$ is a vertex $v$.
Let $z$ denote any $(k-1)$-cycle in $\Lk_v \K$; consider a preimage $a=\lk_v^{-1} z$ supported in $\St_v \Delta$, and let $b$ denote a resolution of $z$ in $\K$ (which exists by the assumed $(k-1)$-chordality). Consider the $k$-cycle $\wh{z}:=a-b$. By $k$-chordality of $\K$, $\wh{z}$ admits a resolution $\wh{c}$ in $\K$; the $k$-chain $c:=\lk_v \wh{c}$ of $\Lk_v \Delta$ is the desired resolution of $z$.
\end{proof}
The chordality conditions in the above lemma are tight:
\begin{ex}\label{ex:LinksResChord}
Let $\Delta$ be the cone with apex $v$ over a square boundary $C$. Then $\Delta$ is resolution $2$-chordal but $C$ is not decomposition $1$-chordal.
\end{ex}

The next result characterizes the behavior of chordality under joins:

\begin{lem}[Join Lemma]\label{lem:susp}
Let $\Delta$ denote any simplicial complex, and let $c$ denote any $d$-cycle whose vertex support is disjoint from $\Delta^{(0)}$.
Let $\Delta\ast c$ denote the complex whose faces are $\sigma\cup\tau$ for $\sigma\in\Delta$ and $\tau$ supported in $c$.

If $\Delta\ast c$ is resolution $k$-chordal, then $\Delta$ is resolution $(k-d-1)$-chordal.
\end{lem}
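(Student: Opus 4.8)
The plan is to reduce the statement about $\Delta$ to a chordality statement about $\Delta \ast c$ by producing, from an arbitrary $(k-d-1)$-cycle $z$ in an induced subcomplex of $\Delta$, a $k$-cycle in the corresponding induced subcomplex of $\Delta \ast c$, resolving it there, and then pushing the resolution back down to $\Delta$ using a "divide by $c$'' operation on the join. First I would fix a subset $V \subseteq \Delta^{(0)}$; since by Facts~\ref{fact:Res-Dec-Chordality}(4) resolution chordality is equivalent to vanishing of the appropriate reduced homology of all induced subcomplexes, it suffices to show $\tilde H_{k-d-1}(\Delta_{|V}) = 0$. Let $z$ be a $(k-d-1)$-cycle in $\Delta_{|V}$. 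The key algebraic input is the Künneth-type behaviour of the join boundary operator recorded in Section~\ref{sec:notation}: $\partial(\sigma_1 \ast \sigma_2) = \partial\sigma_1 \ast \sigma_2 + (-1)^{j+1}\sigma_1 \ast \partial\sigma_2$. From this one checks that the chain $z \ast c$ (extending the join of chains bilinearly) is a cycle in $(\Delta_{|V}) \ast c \subseteq (\Delta \ast c)_{|V \cup c^{(0)}}$, because $\partial(z \ast c) = \partial z \ast c \pm z \ast \partial c = 0$ as both $z$ and $c$ are cycles. Its vertex support is contained in $V \cup c^{(0)}$.

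Next, since $\Delta \ast c$ is resolution $k$-chordal, by Facts~\ref{fact:Res-Dec-Chordality}(4) again we get $\tilde H_k((\Delta \ast c)_{|V \cup c^{(0)}}) = 0$, so there is a $(k+1)$-chain $w$ in $(\Delta \ast c)_{|V \cup c^{(0)}}$ with $\partial w = z \ast c$ and $w^{(0)} \subseteq V \cup c^{(0)}$. The heart of the argument is now a "contraction along $c$'': I would define a map $\lk_c$ (iterating the extended-link / link maps of Lemma~\ref{lem:cone} and Lemma~\ref{lem:econe} over the vertices supported in $c$, or more cleanly a single chain-level operation dual to $(-)\ast c$) that takes a chain in $\Delta \ast c$ and returns a chain in $\Delta$, lowering degree by $d+1$, and satisfies the compatibility $\lk_c(\partial w) = \partial(\lk_c w) $ up to the expected sign together with $\lk_c(y \ast c) = \pm\,y$ for chains $y$ supported in $\Delta$. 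Applying this to $w$ and to $z \ast c = \partial w$ yields a $(k-d)$-chain $c' := \lk_c w$ in $\Delta$ with $\partial c' = \pm z$ and, crucially, $(c')^{(0)} \subseteq w^{(0)} \cap \Delta^{(0)} \subseteq V$; after adjusting by a sign this is the desired resolution of $z$ in $\Delta_{|V}$, proving $\tilde H_{k-d-1}(\Delta_{|V}) = 0$.

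The main obstacle is the bookkeeping for the contraction map $\lk_c$ and, in particular, verifying that it is well-defined on all of $C_\bullet(\Delta \ast c)$ rather than only on the subcomplex $\Delta \ast c_0$ spanned by the faces of the simplex whose boundary chain is $c$ — a general cycle $c$ need not be complete, so one cannot literally iterate the vertex-link isomorphism of \eqref{eq:cone_lem} and must instead argue at the level of the full join chain complex $C_\bullet(\Delta \ast c) \cong C_\bullet(\Delta)\otimes C_\bullet(c)$, isolating the $C_\bullet(\Delta)\otimes [c]$ summand and checking it carries the homology class we need. Controlling the vertex support through this identification (so that no vertices outside $V$ sneak in) is the delicate point; it works because the isomorphism is induced by the face-subdivision $\sigma \mapsto \sigma \cap \Delta^{(0)}$, $\sigma \mapsto \sigma \cap c^{(0)}$, which can only shrink vertex supports. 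The signs from $\text{sgn}(\tau,\sigma)$ and from the $(-1)^{j+1}$ in the join boundary formula are routine once the convention "vertices of $\Delta$ precede vertices of $c$'' is fixed, and I would suppress them in a first pass and reinstate them at the end.
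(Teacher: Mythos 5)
Your proposal follows essentially the same route as the paper: join $z$ with $c$ to obtain the $k$-cycle $z\ast c$, resolve it in $\Delta\ast c$, and push the resolution back down to $\Delta$. The paper compresses your contraction step into the bare assertion that the resolution ``must have the form $\tilde c\ast c$''; your map $\lk_c$ --- realizable concretely as the slant product with any $d$-cochain $\phi$ on the support complex of $c$ with $\phi(c)=1$, which is automatically a cocycle since that complex has no $(d+1)$-faces --- is the honest way to justify that step, so your version is, if anything, more complete.
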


\begin{proof}
Let $z$ denote any $(k-d-1)$-cycle in $\Delta$. Then $\wh{z}=z\ast
c$ is a $k$-cycle in $\Delta\ast c$, and therefore admits a
resolution $\wh{c}$. Note that $\wh{c}$ must have the form $\wh{c}=\tilde{c}*c$ and that $\tilde{c}$ resolves $z$.
\end{proof}

Let us finally establish a generalization of Dirac's Gluing Theorem \cite[Theorem 2]{Dirac}:
\begin{lem}[Dirac's Gluing Lemma]\label{lem:gluing}
Let $\K$, $\Gamma$ denote any two simplicial complexes and let $\mc{M} := \K\cup \Gamma$
\begin{compactenum}[\rm (i)]
\item If $\Delta$ and $\Gamma$ are resolution $k$-chordal complexes, and
$\Delta\cap\Gamma$ is resolution $(k-1)$-chordal, then $\mc{M}$ is
resolution $k$-chordal.

\item Similarly, if $\Delta$ and $(\Gamma, \Gamma\cap\Delta)$ are resolution $k$-chordal then so is $\mc{M}$.

\item If $\Delta$, $\mc{M}$ and $\Delta\cap \Gamma$ are resolution $k$-chordal, then $\Gamma$ is resolution $k$-chordal.
\end{compactenum}
\end{lem}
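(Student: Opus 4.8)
The plan is to prove all three parts using the Mayer--Vietoris sequence together with the homological characterization of resolution chordality from Facts~\ref{fact:Res-Dec-Chordality}(4), namely that $\K$ is resolution $k$-chordal if and only if $\tilde H_k(\K_{|V})=0$ for every vertex subset $V$. The key point is that restriction to a vertex subset $V$ commutes with union and intersection: $\MUBT_{|V}=\K_{|V}\cup\Gamma_{|V}$ and $(\K\cap\Gamma)_{|V}=\K_{|V}\cap\Gamma_{|V}$, so it suffices to establish the vanishing of $\tilde H_k(\MUBT)$ (resp.\ $\tilde H_k(\Gamma)$) under the stated hypotheses applied not just to $\K,\Gamma$ but to all their induced subcomplexes; the induced-subcomplex hypotheses propagate automatically since an induced subcomplex of an induced subcomplex is again induced. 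Thus I will fix $V$ once and for all, replace everything by its restriction to $V$, and just prove the three homological implications for the complexes themselves.

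For part (i), I would write the reduced Mayer--Vietoris sequence
\[
\tilde H_k(\K)\oplus\tilde H_k(\Gamma)\ \longrightarrow\ \tilde H_k(\MUBT)\ \longrightarrow\ \tilde H_{k-1}(\K\cap\Gamma).
\]
The left term vanishes because $\K,\Gamma$ are resolution $k$-chordal, and the right term vanishes because $\K\cap\Gamma$ is resolution $(k-1)$-chordal, so the middle term $\tilde H_k(\MUBT)$ is squeezed to zero. (One must be slightly careful with the low-degree end of the sequence when $k\le 1$, but reduced Mayer--Vietoris is exact all the way down, and resolution chordality at level $k-1$ is exactly the hypothesis needed; the case $k=0$ uses that $\tilde H_{-1}$ of a nonempty complex is $0$.)

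For part (ii), the relative version, I would instead use the long exact sequence of the pair together with excision: excision gives $H_\bullet(\MUBT,\K)\cong H_\bullet(\Gamma,\Gamma\cap\K)$, so the hypothesis ``$(\Gamma,\Gamma\cap\K)$ is resolution $k$-chordal'' (interpreted as vanishing of the relative homology in degree $k$) feeds into the long exact sequence of the pair $(\MUBT,\K)$:
\[
\tilde H_k(\K)\ \longrightarrow\ \tilde H_k(\MUBT)\ \longrightarrow\ H_k(\MUBT,\K)\ \cong\ H_k(\Gamma,\Gamma\cap\K),
\]
and since both flanking terms vanish, so does $\tilde H_k(\MUBT)$. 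For part (iii), I run the Mayer--Vietoris sequence one notch further to the right,
\[
\tilde H_k(\MUBT)\ \longrightarrow\ \tilde H_{k-1}(\K\cap\Gamma)\ \longrightarrow\ \tilde H_{k-1}(\K)\oplus\tilde H_{k-1}(\Gamma),
\]
no wait --- for (iii) I instead want to isolate $\tilde H_k(\Gamma)$, so I use the piece
\[
\tilde H_k(\K\cap\Gamma)\ \longrightarrow\ \tilde H_k(\K)\oplus\tilde H_k(\Gamma)\ \longrightarrow\ \tilde H_k(\MUBT):
\]
the hypotheses that $\K\cap\Gamma$ and $\MUBT$ are resolution $k$-chordal kill the outer terms, forcing $\tilde H_k(\K)\oplus\tilde H_k(\Gamma)=0$, hence $\tilde H_k(\Gamma)=0$ (and in fact $\tilde H_k(\K)=0$ too, consistent with the hypothesis on $\K$). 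The main obstacle --- really the only subtlety --- is bookkeeping at the boundary cases (very small $k$, empty intersection $\K\cap\Gamma$, and getting the reduced-versus-unreduced and relative conventions to line up with Facts~\ref{fact:Res-Dec-Chordality}(4)); the homological core is a one-line diagram chase in each case. I expect to handle the empty-intersection case by noting that then $\MUBT$ is a disjoint union and the relevant reduced homology statements degenerate correctly, and to state the relative resolution chordality of $(\Gamma,\Gamma\cap\K)$ explicitly as vanishing of $\tilde H_k$ of the relative complex restricted to every $V$, which is the natural relative analog already implicit in the Cone Lemma's use of relative complexes.
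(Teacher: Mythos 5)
Your proposal is correct, and for part (iii) it coincides exactly with the paper's argument (the paper also invokes the Mayer--Vietoris sequence of $\Delta_{|V},\Gamma_{|V}$ there). For parts (i) and (ii) you take a more uniform, exact-sequence route where the paper works at the chain level: for (i) the paper splits a $k$-cycle $z$ of $\mc{M}$ as $a=z_{|\Delta}$ and $b=z-a$, resolves $\partial a=-\partial b$ inside $\Delta\cap\Gamma$ using $(k-1)$-chordality, and then resolves the corrected cycles $a-\wt{c}$ and $b+\wt{c}$ in $\Delta$ and $\Gamma$ respectively; for (ii) it runs the analogous correction using a relative resolution in $(\Gamma,\Gamma\cap\Delta)$. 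Your Mayer--Vietoris and long-exact-sequence/excision arguments are the same mathematics with the connecting homomorphism left packaged rather than unwound, and they are legitimate provided one supplies exactly the two ingredients you identify: Facts~\ref{fact:Res-Dec-Chordality}(4) (and its relative analogue, which the paper states only for absolute complexes but which holds by the same support argument, since every face of a relative cycle appearing with nonzero coefficient lies outside the subcomplex and hence must occur in $\partial c$), and the observation that restriction to a vertex set commutes with union and intersection. What the paper's chain-level version buys is that the vertex-support condition $c^{(0)}=z^{(0)}$ is visibly maintained at each step, with no need to worry about reduced homology in low degrees or empty intersections; what your version buys is brevity and a single template for all three parts. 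You correctly flag the only genuine delicacies (the $k\le 0$ and empty-intersection boundary cases, and the meaning of relative resolution chordality), so I see no gap.
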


\begin{proof}
(i)
Let $z$ denote any $k$-cycle in $\mc{M}$, and consider the chains
$a:=z_{|\Delta}$ and $b=z-a$. Consider now $\wt{z}:=\partial a=-\partial
b\subset \Delta\cap\Gamma$. There exists a resolution $\wt{c}$
of $\wt{z}$ by $(k-1)$-chordality of $\Delta\cap\Gamma$, and there exist resolutions for
$a-\wt{c}$ and $b+\wt{c}$ by $k$-chordality of $\Delta$ resp.\ $\Gamma$.
The sum of the resulting resolutions gives the desired resolution of $z$.

(ii) Let $z,a,b$ be as before. As $(\Gamma, \Gamma\cap\Delta)$ is $\cdr_k$-chordal, it has a resolution $b'$ of $b$, so in $\mc{M}$ we have $\partial b'=b+c_b$ where the chain $c_b$ is supported in $\Delta\cap\Gamma$ and its vertex support is contained in the vertex support of $b$. Further, $\partial c_b=\partial a$. As $\Delta$ is $\cdr_k$-chordal,
it has a resolution $a'$ of $a-c_b$, so in $\mc{M}$ we get that $a'+b'$ resolves $z$.

(iii)
Now, for a $k$-cycle $z$ in $\Gamma$ with vertex set $V=z^{(0)}$
consider the Mayer--Vietoris sequence
\[
        \cdots\; \longrightarrow \,
        \wt{H}_{k}(\Delta_{|V}\cap \Gamma_{|V}) \, \longrightarrow \,
        \wt{H}_{k}(\Delta_{|V})\oplus \wt{H}_{k}(\Gamma_{|V}) \,
\longrightarrow\,
        \wt{H}_{k}(\Delta_{|V}\cup\Gamma_{|V})
        \, \longrightarrow \,
       \wt{H}_{k-1}(\Delta_{|V}\cap \Gamma_{|V}) \, \longrightarrow \;
        \cdots
    \]
By resolution $k$-chordality of $\Delta, \Delta\cup \Gamma$ and $\Delta\cap \Gamma$ we conclude $\wt{H}_{k}(\Gamma_{|V})=0$, proving that $\Gamma$ is resolution $k$-chordal as well.
\end{proof}

\section{Propagation of chordality}\label{sec:Propagation}
One of the fundamental theorems of chordal graph theory is the classical Theorem \ref{thm:folkloreChord}, asserting that for a chordal graph its $(1-)$clique complex is $1$-Leray, and equivalently, that its Stanley-Reisner ideal has a linear resolution.
For homological chordality, the following more detailed version of Theorem \ref{mthm:persist} seems the appropriate generalization:

\begin{thm}\label{thm:prp_exterior}
Let $\Delta$ denote any (abstract) simplicial complex without missing faces of dimension $> k$, and fix any field of coefficients. The following are equivalent:
\begin{compactenum}[\rm (1)]
\item $\Delta$ is resolution $i$-chordal for $i\in [k,2k-1]$.
\item $\Delta$ is resolution $i$-chordal for $i\ge k$.
\item $\Delta$ is $k$-Leray.
\item The Stanley--Reisner ring of $\Delta$ is of (Castelnuovo-Mumford) regularity at most $k$.
\end{compactenum}
If $\Delta$ has no missing faces of dimension $< k$, then this is additionally equivalent to
\begin{compactenum}[\rm (1)]
\setcounter{enumi}{+4}
\item $I_\Delta$, the Stanley--Reisner ideal of $\Delta$, admits a linear resolution.
\item  $\Delta^\ast$, the combinatorial Alexander dual of $\Delta$, is Cohen--Macaulay.
\end{compactenum}
\end{thm}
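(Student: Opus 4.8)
The plan is to establish the equivalences by a cycle of implications, using the basic operations lemmas from Section \ref{sec:basics} as the main tools, together with the standard dictionary relating Leray-type vanishing, Castelnuovo–Mumford regularity, linear resolutions, and Alexander duality.

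The implication $(2)\Rightarrow(1)$ is trivial, and $(1)\Rightarrow(2)$ is the heart of the matter: this is exactly the Propagation Theorem \ref{mthm:persist}, so I would invoke it. The equivalence $(2)\Leftrightarrow(3)$ is essentially Fact \ref{fact:Res-Dec-Chordality}(4): resolution $i$-chordality of $\Delta$ is, by definition and that fact, the vanishing $\widetilde{H}_i(\Delta_{|V})=0$ for all vertex subsets $V$, and quantifying over all $i\ge k$ gives precisely the $k$-Leray condition. So the first three statements are equivalent once Theorem \ref{mthm:persist} is in hand. For $(3)\Leftrightarrow(4)$ I would appeal to the known homological interpretation of regularity of a Stanley–Reisner ring: by Hochster's formula and the Eagon–Reiner–type computation of local cohomology, $\mathrm{reg}\,\mathbb{F}[\Delta]\le k$ is equivalent to $\widetilde{H}_{i-|W\setminus V|-1}$-type vanishing which, after reindexing through all restrictions $\Delta_{|V}$, is exactly the statement that $\Delta$ is $k$-Leray (this is the standard fact that $k$-Lerayness of $\Delta$ equals $\mathrm{reg}\,\mathbb{F}[\Delta]\le k$; I would cite the relevant source rather than reprove Hochster's formula). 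For the last two items, assuming in addition no missing faces of dimension $<k$ (so that $\Delta$ is ``pure'' in the relevant skeletal range and its Alexander dual has a clean description): $(5)\Leftrightarrow(6)$ is precisely the Eagon–Reiner theorem \cite{Eagon-Reiner}, and $(4)\Leftrightarrow(5)$ follows because the hypothesis on missing faces forces $I_\Delta$ to be generated in a single degree (all minimal nonfaces have the same dimension $k+1$), and for an ideal generated in a single degree $d$, having a linear resolution is equivalent to $\mathrm{reg}=d$; here $d=k+1$ matches $\mathrm{reg}\,\mathbb{F}[\Delta]=\mathrm{reg}\,I_\Delta-1\le k$.

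The main obstacle is the bookkeeping in $(3)\Leftrightarrow(4)$: one must be careful that the ``no missing faces of dimension $>k$'' hypothesis is compatible with the general (possibly non-pure) setting, and that the reindexing between the homological degree in $\widetilde{H}_i(\Delta_{|V})$ and the cohomological degree governing regularity is done correctly across all induced subcomplexes simultaneously, not just for $V=\Delta^{(0)}$. A secondary subtlety is verifying that when we pass to items $(5)$ and $(6)$ the extra hypothesis (no missing faces of dimension $<k$) is exactly what is needed to guarantee $I_\Delta$ is equigenerated in degree $k+1$ and that $\Delta^\ast$ is pure of the expected dimension, so that the Eagon–Reiner equivalence applies verbatim. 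Everything else is either a direct citation or a formal consequence of the already-established Propagation Theorem and Fact \ref{fact:Res-Dec-Chordality}.
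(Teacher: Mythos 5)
Your reductions $(2)\Leftrightarrow(3)$ (via Fact~\ref{fact:Res-Dec-Chordality}(4)), $(3)\Leftrightarrow(4)$ (the standard $k$-Leray/regularity dictionary, as in \cite{KM}), and $(4)\Leftrightarrow(5)\Leftrightarrow(6)$ under the extra hypothesis (Fr\"oberg, Eagon--Reiner) all match the paper and are fine. The problem is the step you call ``the heart of the matter'': you dispose of $(1)\Rightarrow(2)$ by invoking the Propagation Theorem~\ref{mthm:persist}, but in the paper's logical structure that theorem is a \emph{consequence} of Theorem~\ref{thm:prp_exterior}, not an input to it. Theorem~\ref{mthm:persist} is stated in the introduction without proof; the remark following the proof of Theorem~\ref{thm:prp_exterior} explains that under the hypothesis of no missing faces of dimension $>k$ one has $\Delta=\Cl_k(\Delta)$, so resolution and decomposition $\ell$-chordality coincide for $\ell\ge k$, and hence Theorem~\ref{thm:prp_exterior} \emph{is} the detailed form of Theorem~\ref{mthm:persist}. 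Citing the latter to prove the former is circular, and your proposal therefore contains no argument at all for the one implication that carries the content of the theorem.

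What is actually needed for $(1)\Rightarrow(2)$ is a genuine induction. The paper's argument runs as follows: given a $j$-cycle $\gamma$ with $j\ge 2k$, induct on $j$, on the size of the vertex support, and on the lexicographic measure $\mu(\gamma)=(M,m)$ recording the maximal dimension $M$ of a missing face supported in $\gamma$ and the number $m$ of such faces. One picks a maximal-dimensional missing face $\sigma$ supported in $\gamma$, sets $\tau=\sigma\setminus\{v\}$, resolves the link cycle $\Lk_\tau\gamma$ using resolution $(j-|\tau|)$-chordality (available because $j-|\tau|\ge k$ and, by downward induction, $<j$), and forms $\gamma'=\gamma-\gamma_{|(\St_\tau\Delta,\partial\St_\tau\Delta)}+(-1)^{j-1}\partial\tau*r$; one then checks $\gamma'$ either vanishes or has strictly smaller $\mu$, and assembles a resolution of $\gamma$ from one of $\gamma'$ together with $\tau*r$ (legitimate because $\Delta$ has no missing faces of dimension $>k$). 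Alternatively, the paper notes that the Herzog--Srinivasan inequality $t_a(\K)\le t_{a-1}(\K)+t_1(\K)$ yields $(1)\Rightarrow(2)$ by a short induction on $a$. Your proposal would need to supply one of these (or an equivalent) arguments; as written, the central implication is unproved.
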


\begin{proof}
The equivalence $(2) \Longleftrightarrow  (3)$ follows from Fact \ref{fact:Res-Dec-Chordality}(4). The equivalence $(3)\Longleftrightarrow (4)$ was settled in \cite{KM} and (under the additional assumptions) $(4)\Longleftrightarrow (5) \Longleftrightarrow (6)$ are part of a classical characterization, cf. \cite{Froberg88}, \cite{Eagon-Reiner} with the last part of the equivalence known as the Eagon--Reiner Theorem. It remains to show that $(1)$ implies $(2)$ (the converse implication is trivial).

Let $\gamma$ denote any $j$-cycle of $\Delta$, $j\ge 2k$, and assume by induction that $\Delta$ is known to be resolution $i$-chordal for all $k\le i< j$, and that every $j$-cycle supported on less vertices then $\gamma$ admits a resolution. (The base case for the last assumption is that the simplex is resolution $l$-chordal for any $l$, see Fact~\ref{fact:Res-Dec-Chordality}(5).)

Define $\mu(\gamma)=(M,m)$ to denote the pair with $M$ equal to the maximum dimension of a missing face $\sigma$ supported in $\gamma$ (namely, $\sigma$ is a missing face of $\Delta$ all of whose facets are supported in $\gamma$), and $m>0$ equal to the number of missing $M$-faces supported in $\gamma$. Using the lexicographic order on $\N^2$, and adding a minimum $\hat{0}$ for the $(j+1)$-simplex,
we may assume by induction that every $j$-cycle $\gamma'$ with $\mu(\gamma')<_{\rm{lex}}\mu(\gamma)$ admits a resolution.

Now,
let $\sigma$ be any
minimal nonface supported in $\gamma$ of maximal dimension, let $v$ denote any of its vertices and let $\tau=\sigma \setminus \{v\}$.
Since $\Delta$ is resolution $(j-|\tau|)$-chordal, the cycle
$\Lk_\tau \gamma$
(defined by iteratively applying the coning isomorphism of Lemma~\ref{lem:cone} to $\gamma_{|(\St_\tau \Delta, \partial\St_\tau\Delta)}$) admits a resolution $r$.

Consider $\gamma':= \gamma - \gamma_{|(\St_\tau \Delta, \partial \St_\tau\Delta)} +(-1)^{j-1} \partial\tau * r$; it is a $j$-cycle. We shall see that either $\gamma'=0$ or $\mu(\gamma')<\mu(\gamma)$.

In the first case ($\gamma'=0$), $\gamma=\gamma_{|\St_\tau \Delta} +(-1)^j \partial\tau * r = \tau * r' +(-1)^j \partial\tau * r$ for some chain $r'$. Applying the differential to both sides we get $r'=\partial r$, hence $\gamma=\partial(\tau * r)$, and so $\tau*r$ resolves $\gamma$.
In the second case ($\gamma'\neq 0$), note that $\sigma$ is not supported on $\gamma'$, and that the missing faces of $\gamma'$ and not of $\gamma$ all have dimension $<\dim(\sigma)$, so $\mu(\gamma')<\mu(\gamma)$ and hence $\gamma'$ has a resolution, denote it by $s$.

Now, for any $(j-|\tau|)$-face $\sigma'$ in the support of $r$, we have $\tau *\sigma' \in \Delta$ as $\Delta$ has no missing faces of dimension $>k$, so $\tau * r$ is a chain of $\Delta$.
The sum $c:=s+(-1)^j\tau *r$ gives the desired resolution of $\gamma$: indeed,
$\partial c= \partial s + (-1)^{2j}\tau *\partial r +(-1)^j \partial \tau *r = \gamma'+\gamma_{|\St_v \Delta}+(-1)^j\partial\tau * r=\gamma$.
\end{proof}

\begin{rem} Notice that theorem \ref{thm:prp_exterior} is a strengthening of theorem \ref{mthm:persist}. A complex $\Delta$ with no missing faces of dimension bigger than $k$ satisfies $\Delta = \Cl_k(\Delta)$ thus resolution and decomposition $\ell$-chordality are equivalent for $\ell \ge k$ in this setting.
\end{rem}

This theorem is tight:
\begin{prp}\label{prop:tightPropagation}
For every $k\ge 2$, there is a simplicial complex $\mathfrak{J}_k$ that is resolution $i$-chordal for $i\in [k,2k-2]$, and has no missing face of dimension $>k$, but that is not $k$-Leray.
\end{prp}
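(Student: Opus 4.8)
The plan is to take for $\mathfrak{J}_k$ the join $\partial\Delta^k\ast\partial\Delta^k$ of two copies of the boundary complex of a $k$-dimensional simplex (here $\partial\Delta^k$ denotes that boundary complex, which triangulates $S^{k-1}$). This is the standard $(2k+2)$-vertex triangulation of the sphere $S^{2k-1}$, since $\partial\Delta^k\simeq S^{k-1}$ and $S^a\ast S^b\cong S^{a+b+1}$. All three required properties should follow from elementary computations.

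First I would check the missing-face condition. Writing the vertex set as $V_1\sqcup V_2$, where $V_i$ is the vertex class of the $i$-th copy of $\partial\Delta^k$, a subset of $V_1\sqcup V_2$ is a face of $\mathfrak{J}_k$ exactly when it contains neither $V_1$ nor $V_2$ in full; hence it fails to be a face precisely when it contains one of $V_1,V_2$ entirely. Thus the inclusion-minimal nonfaces are exactly $V_1$ and $V_2$, each of cardinality $k+1$, so $\mathfrak{J}_k$ has precisely two missing faces and both have dimension $k$; in particular there are no missing faces of dimension $>k$.

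Next, by Fact~\ref{fact:Res-Dec-Chordality}(4), resolution $i$-chordality is equivalent to the vanishing of $\tilde H_i$ on every induced subcomplex, and $k$-Leray is the simultaneous such vanishing for all $j\ge k$; so I would compute the reduced homology of an arbitrary induced subcomplex. Writing $V=V_1'\sqcup V_2'$ with $V_i'\subseteq V_i$, we have $(\mathfrak{J}_k)_{|V}=(\partial\Delta^k)_{|V_1'}\ast(\partial\Delta^k)_{|V_2'}$, and each factor is either the full simplex on $V_i'$ (when $V_i'\subsetneq V_i$, hence contractible), or $\partial\Delta^k\simeq S^{k-1}$ (when $V_i'=V_i$), or the irrelevant complex $\{\emptyset\}$ (when $V_i'=\emptyset$, which leaves the other factor unchanged under the join). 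By the reduced Künneth formula for joins over a field, $\tilde H_n(X\ast Y)\cong\bigoplus_{i+j=n-1}\tilde H_i(X)\otimes\tilde H_j(Y)$ (including the $i=-1$ term, which accounts for the $\{\emptyset\}$-factor cases), every induced subcomplex other than $\mathfrak{J}_k$ itself has reduced homology concentrated in degrees $\le k-1$, while $\tilde H_\ast(\mathfrak{J}_k)=\tilde H_\ast(S^{2k-1})$ is concentrated in degree $2k-1$. Since $k\ge 2$, the interval $[k,2k-2]$ is nonempty and disjoint from both $\{k-1\}$ and $\{2k-1\}$; hence $\tilde H_i$ vanishes on all induced subcomplexes for $k\le i\le 2k-2$, so $\mathfrak{J}_k$ is resolution $i$-chordal in that range, whereas $\tilde H_{2k-1}(\mathfrak{J}_k)\ne 0$ with $2k-1\ge k$, so $\mathfrak{J}_k$ is not $k$-Leray. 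This is exactly the tightness of the interval $[k,2k-1]$ in Theorem~\ref{thm:prp_exterior}.

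The only point requiring care — bookkeeping rather than a genuine obstacle — is keeping the reduced-homology conventions for the empty vertex set and the irrelevant/void complex consistent so that the Künneth formula applies uniformly across the degenerate cases ($V_1'$ or $V_2'$ empty, or one of them the full class); one also double-checks the edge case that when exactly one $V_i'$ is full the join is a cone, hence contractible, in agreement with the Künneth computation.
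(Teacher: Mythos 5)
Your construction is exactly the one in the paper (the join of two disjoint copies of $\partial\Delta^k$), and your verification — induced subcomplexes are joins of simplices and/or copies of $S^{k-1}$, so their reduced homology sits only in degrees $k-1$ or $2k-1$, both outside $[k,2k-2]$ — is the paper's argument carried out in more detail. Correct, same approach.
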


\begin{proof}
Let $\Delta, \Delta'$ be two disjoint copies of the boundary complex of the $k$-simplex. Then the join
$\mathfrak{J}_k=\Delta \ast \Delta'$ has missing faces exclusively in dimension $k$, and it satisfies the desired chordality property since every induced subcomplex has homology only in dimension $k-1$ (precisely if this subcomplex coincides with $\Delta$ or $\Delta'$) or $2k-1$ (if it is the entire complex~$\mathfrak{J}_k$).
\end{proof}

\begin{rem}
The implication $(1)\Rightarrow (2)$ in Theorem~\ref{thm:prp_exterior} follows also from the following recent algebraic result of Herzog and Srinivasan \cite{HS}: let $\beta_{a,j}(\K)$ denote the algebraic Betti numbers of the Stanley-Reisner ring of $\K$, where $a,j\in \N$, and define
\[t_a(\K):= \max\{j:\ \beta_{a,j}(\K)\neq 0\} = \max\{j:\ \exists W\subseteq\K^{(0)}, |W|=j, \tilde{H}_{j-a-1}(\K_{|W};\mathbb{F})\neq 0\},\]
where the second equality holds by Hochster's formula, and $\mathbb{F}$ is the field fixed in Theorem~\ref{thm:prp_exterior}. It is shown in \cite[Cor. 4]{HS} that for any $a\geq 1$,
$t_a(\K)\leq t_{a-1}(\K)+t_1(\K)$.
\end{rem}
\begin{proof}[Proof of $(1)\Rightarrow (2)$:]
What we need to show is that for any $a\ge 1$, $t_a(\K)\le a+k$.
For $a=1$ this is obvious, as $t_1(\K)$ is the size of a maximal missing face of
$\K$, and by assumption $\K$ has no missing faces of $\dim>k$.
For $a>1$, by induction on $a$, and the result \cite[Cor.4]{HS} recalled above, $t_a(\K)\le t_{a-1}(\K)+t_1(\K)\leq (a-1)+k+(1+k)=a+2k$.
However, by the assumption on vanishing homology in (1), $t_a(\K)\notin [a+k+1, a+2k]$, so $t_a(\K)\le a+k$.
\end{proof}

\subsubsection*{Axiomatics}
We now notice that our theory remains valid if we consider slightly different homology theories. Consider any functor ${\mathcal F}$ from the category of simplicial complexes to the category of chain complexes over a ring $R$ such that the $k$-th module of ${\mathcal F}(\K)_\bullet$ is $C_k(\Delta, R)$, perhaps with a different differential.  A simplicial homology theory is any homology theory arising in this way. We say that such a theory is ``resolving'' if it satisfies the extended cone lemma, i.e.\ if $\lk_{\tau}\ :\ C_k(\K) \longrightarrow C_{k-1}(\eLk_{\tau} \K)$ induces a surjection in homology for any face $\tau\in\K$.
It is easy to check that a simplicial homology theory is resolving if
and only if it satisfies Theorem \ref{mthm:persist}.
\begin{ex}
The homology theory that takes $\Delta$ to  $H_\ast (\mc{M}\Delta)$, where $\mc{M}\Delta$ is the moment-angle complex of $\Delta$, cf. \cite{DJ}, is resolving.
\end{ex}
In particular, it is not necessary for the homology theory to satisfy invariance under homotopy equivalence in order to be resolving.

\section{Dirac's Theorem: Minimal homology cuts and Reverse Propagation}
\label{sec:Dirac}
In this section, we examine one of the most fundamental characterizations of
chordal graphs, the theorem of Dirac \cite{Dirac}, and observe that it is the basic case $k=1$ of what we call a \Defn{reverse propagation} phenomenon: i.e., a result
that asserts that decomposition $k$-chordal complexes contain nontrivial
decomposition $(k-1)$-chordal complexes.

\subsection{The classical case: Dirac's Theorems for graphs}

Dirac's Theorems are perhaps the most influential contributions to the theory of chordal graphs; they find applications, for instance, to colorings and computational graph theory. Recall that a \Defn{cut relative to vertices $v,w$} of a graph $G=(V,E)$, is a set
of vertices $V'\subset V\setminus \{v,w\}$ such that every path from $v$ to $w$
must contain a vertex of $V'$, i.e. $V'$ separates $v$ from $w$. The cut is
\Defn{minimal} if no proper subset of $V'$ separates $v$ from $w$. Dirac's first theorem characterizes relative minimal cuts:

\begin{thm}[Dirac's Minimal Cut Theorem]
In every chordal graph, the induced subgraph on a relative minimal cut is a
complete graph.
\end{thm}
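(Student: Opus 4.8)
The plan is to argue by contradiction, exploiting the extremal structure of a minimal cut together with the existence of a chord. Suppose $V'$ is a minimal cut separating $v$ from $w$ in a chordal graph $G$, and suppose the induced subgraph $G_{|V'}$ is not complete, say $a,b\in V'$ are non-adjacent. The key structural observation about minimal cuts is this: since $V'$ is minimal, for every $u\in V'$ the set $V'\setminus\{u\}$ fails to separate $v$ from $w$, so there is a $v$--$w$ path $P_u$ avoiding $V'\setminus\{u\}$ but necessarily meeting $V'$; hence $P_u$ meets $V'$ exactly in $u$. Let $C_v$ (resp.\ $C_w$) denote the connected component of $v$ (resp.\ $w$) in $G - V'$; the above shows that \emph{every} vertex of $V'$ has a neighbor in $C_v$ and a neighbor in $C_w$.

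First I would produce a shortest path $Q_v$ from $a$ to $b$ whose interior lies entirely in $C_v$: such a path exists because $C_v$ is connected, $a$ and $b$ both have neighbors there, and one can route from a neighbor of $a$ to a neighbor of $b$ inside $C_v$; taking a shortest such walk and prepending $a$, appending $b$, gives an induced path (no chords, by minimality of length) with interior in $C_v$. Symmetrically construct $Q_w$ from $a$ to $b$ with interior in $C_w$. Concatenating $Q_v$ and $Q_w$ yields a cycle $z$ through $a$ and $b$; since $a$ and $b$ are non-adjacent and the two arcs have interiors in disjoint vertex sets $C_v$ and $C_w$, this cycle is induced (no chords between the two arcs, since $C_v$ and $C_w$ lie in different components of $G-V'$ and thus have no edges between them), hence a simple induced cycle of length $\ge 4$.

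Now I would invoke chordality of $G$: the cycle $z$ must have a chord $e$ connecting two non-adjacent vertices of $z$. But every chord of $z$ either joins two vertices of $Q_v$ (impossible, $Q_v$ induced), two vertices of $Q_w$ (impossible, $Q_w$ induced), or one vertex of $C_v$ to one vertex of $C_w$ (impossible, no edges across the cut), with the only remaining possibility being the edge $ab$ itself — which we assumed absent. This contradiction shows $G_{|V'}$ is complete.

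The main obstacle is the careful bookkeeping in the second paragraph: one must be sure the concatenated path is genuinely an \emph{induced} cycle, which rests precisely on (i) the minimality of $V'$ forcing each vertex of $V'$ to touch both $C_v$ and $C_w$, so that the arcs $Q_v,Q_w$ actually exist, and (ii) the fact that $C_v$ and $C_w$, being distinct connected components of $G-V'$, admit no connecting edges. Once those two points are nailed down, the application of the definition of chordality is immediate. (This is the $k=1$ instance of the reverse-propagation phenomenon the section is building toward: the minimal cut plays the role of a ``homology cut,'' and its completeness is the $1$-dimensional shadow of decomposition chordality of $G$.)
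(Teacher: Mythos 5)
Your proof is correct and complete. Note that the paper itself gives no proof of this statement: it is quoted as a classical theorem of Dirac, cited from \cite{Dirac}, and used later only as a black box (e.g.\ in the $k=1$ case of Corollary \ref{cor:existence_Homology_minimal_cut}). What you have written is the standard textbook argument --- the minimality of the cut forces every cut vertex to have neighbors in both components $C_v$ and $C_w$, two non-adjacent cut vertices $a,b$ then yield an induced cycle of length at least $4$ by concatenating shortest $a$--$b$ paths through $C_v$ and through $C_w$, and chordality is contradicted since no chord can cross between the two components, lie inside either shortest path, or be the missing edge $ab$. All the delicate points (existence of the two arcs via the minimality observation, and the absence of edges between distinct components of $G-V'$) are correctly identified and handled, so there is no gap.
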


By iteratively applying the Minimal Cut Theorem, until only one vertex is left in a connected component,
one immediately derives Dirac's Theorem on elimination orders in a chordal
graph.

\begin{thm}[Dirac's Reverse Propagation Theorem]
In every chordal graph, there is a vertex such that the induced subgraph on its link is complete.
\end{thm}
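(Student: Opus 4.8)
The plan is to prove the statement by induction on the number of vertices, but in a slightly strengthened form that makes the induction go through: \emph{every chordal graph has a simplicial vertex} (a vertex whose link induces a complete graph), \emph{and if the graph is not complete it has at least two non-adjacent simplicial vertices}. The strengthening is exactly what lets us locate a simplicial vertex \emph{away} from a prescribed clique, which is what we will need. The base case is a single vertex (or, more conveniently, any complete graph), where the statement is trivial since every vertex is simplicial and the ``moreover'' clause is vacuous.

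For the inductive step, let $G$ be a chordal graph that is not complete; then induced subgraphs of $G$ are again chordal. Pick two non-adjacent vertices $v,w$ and a minimal cut $S$ relative to $v,w$. By Dirac's Minimal Cut Theorem (stated above), the induced subgraph $G[S]$ is complete. Let $C_v$ and $C_w$ be the vertex sets of the connected components of $G-S$ containing $v$ and $w$ respectively; these are distinct and nonempty, and $v,w\notin S$. Consider the chordal induced subgraphs $G_v:=G[C_v\cup S]$ and $G_w:=G[C_w\cup S]$; since $C_w\neq\emptyset$ and $C_v\neq\emptyset$, each of $G_v,G_w$ has strictly fewer vertices than $G$, so the inductive hypothesis applies to both.

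The key step is to show that $G_v$ has a simplicial vertex lying inside $C_v$, and likewise for $G_w$. If $G_v$ is complete this is immediate, as any vertex of $C_v$ has all remaining vertices as its neighborhood. Otherwise the inductive hypothesis furnishes two non-adjacent simplicial vertices of $G_v$; since $S$ is a clique, these two vertices cannot both lie in $S$, so at least one of them, call it $u$, lies in $C_v$. In either case $u\in C_v$ is simplicial in $G_v$; and because $S$ separates $C_v$ from the rest of $G$, we have $N_G(u)\subseteq C_v\cup S=V(G_v)$, hence $N_G(u)=N_{G_v}(u)$ induces a complete graph, i.e.\ $u$ is simplicial in $G$. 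Symmetrically we obtain $u'\in C_w$ that is simplicial in $G$. As $u\in C_v$ and $u'\in C_w$ lie in different components of $G-S$ and neither lies in $S$, they are non-adjacent in $G$; this proves the strengthened statement for $G$, and applying it to $G$ itself (or noting the complete case directly) yields the theorem.

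The main obstacle is precisely the bookkeeping that forces the simplicial vertices produced by induction to lie in the components $C_v,C_w$ rather than in the separator $S$: this is where the strengthened hypothesis (two \emph{non-adjacent} simplicial vertices) together with the cliqueness of $S$ is essential, and it is also what allows the simplicial property to be transferred from $G_v$ (resp.\ $G_w$) back to the ambient graph $G$ via the separation property of $S$. Everything else is routine. (Iterating this argument, eliminating one simplicial vertex at a time, recovers Dirac's full theorem on perfect elimination orders, but for the present statement a single simplicial vertex suffices.)
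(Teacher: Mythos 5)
Your proof is correct and takes essentially the same approach as the paper: both derive the existence of a simplicial vertex from Dirac's Minimal Cut Theorem by recursing into the components determined by a minimal clique separator. Your write-up simply makes rigorous, via the standard strengthening to two non-adjacent simplicial vertices, the iteration that the paper compresses into a single sentence (``iteratively applying the Minimal Cut Theorem, until only one vertex is left in a connected component'').
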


To justify the name we gave this theorem, notice that a complete graph is merely a resolution $0$-chordal complex. This motivates us to search for
higher-dimensional reverse propagation; more accurately, we are lead to
consider the following question, to be answered in the next subsection:
\begin{center}
\emph{In every decomposition $k$-chordal complex, must there be a face whose
extended link induces a subcomplex which is resolution $(k-1)$-chordal?}
\end{center}

\subsection{Dirac's Theorem in higher dimensions}
Before we address this problem, let us introduce another notion of higher
chordality:
\subsubsection*{Dirac complexes} Let us call a simplicial complex $\K$
\Defn{$k$-Dirac} if one of the following conditions holds:
\begin{enumerate}
\item $\Sk_k \K = \Sk_k \tau$ for some simplex $\tau$, or
\item There exists a face $\sigma$ of $\K$, $\dim \sigma \le k-1$, such that $\K-\sigma$ is $k$-Dirac and the extended link
$\eLk_\sigma \K$ is $(k-1)$-Dirac and satisfies
$\Cl_{k-1}\eLk_\sigma \K = \eLk_\sigma(\Cl_{k} \K)$.
\end{enumerate}
Note that a graph is chordal if and only if it is $1$-Dirac.
Compared to resolution and decomposition chordality, the Dirac property is combinatorial rather than homological, and one can
therefore expect it to be more restrictive than these homological notions:

\begin{prp}\label{prp:dirac_to_chordal}
Every $k$-Dirac complex is
decomposition $k$-chordal, for every ring of coefficients $R$.
\end{prp}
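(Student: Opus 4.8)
The plan is to prove the statement by induction on the recursive construction of a $k$-Dirac complex $\K$, following the two-case definition. The base case is immediate: if $\Sk_k \K = \Sk_k \tau$ for a simplex $\tau$, then $\K$ is a $k$-clique, so by Facts~\ref{fact:Res-Dec-Chordality}(2) and (5) it is $\cdd_k$-chordal (the simplex is resolution $k$-chordal, hence $\Cl_k \K = \K$ is resolution $k$-chordal, hence $\K$ is decomposition $k$-chordal). For the inductive step, suppose $\sigma$ is a face of $\K$ with $\dim\sigma\le k-1$ such that $\K-\sigma$ is $k$-Dirac, $\eLk_\sigma\K$ is $(k-1)$-Dirac, and the compatibility condition $\Cl_{k-1}\eLk_\sigma\K = \eLk_\sigma(\Cl_k\K)$ holds. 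By induction, $\K-\sigma$ is $\cdd_k$-chordal and $\eLk_\sigma\K$ is $\cdd_{k-1}$-chordal, and the latter gives, via Facts~\ref{fact:Res-Dec-Chordality}(2), that $\Cl_{k-1}\eLk_\sigma\K$ is resolution $(k-1)$-chordal.

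The heart of the argument is then a decomposition/excision step for a $k$-cycle $z$ in $\K$. I would split $z = z_{|(\K-\sigma)} + z'$, where $z'$ collects the faces of $z$ containing $\sigma$; this $z'$ lives naturally in the relative complex $(\St_\sigma\K,\partial\St_\sigma\K)$. Applying the extended-link map $\lk_\sigma$ of Lemma~\ref{lem:econe} carries $z'$ to a $(k-|\sigma|)$-cycle $\lk_\sigma z'$ in $\eLk_\sigma\K$ — more precisely, writing $z' = \sigma * w$ with $\partial w = \lk_\sigma\partial z' $ appropriately signed, one checks $\lk_\sigma z' $ is a cycle. Since $\Cl_{k-1}\eLk_\sigma\K$ is resolution $(k-1)$-chordal and, by the compatibility hypothesis, $\eLk_\sigma(\Cl_k\K) = \Cl_{k-1}\eLk_\sigma\K$, this cycle admits a resolution; pulling it back through $\eLk_\sigma$ and pre-joining with $\partial\sigma$ expresses $z'$ (up to a correction cycle supported in $\partial\St_\sigma\K\subseteq\K-\sigma$ with vertex support inside $z^{(0)}$) as $\partial(\sigma * r)$ for a chain $r$ supported on vertices of $z$. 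One must verify that $\sigma * r$ is genuinely a chain of $\Cl_k\K$: its $k$-faces are $\sigma*\sigma'$ for $(k-|\sigma|)$-faces $\sigma'$ in $\eLk_\sigma(\Cl_k\K)$, which is exactly the compatibility condition, and then $\sigma*r$ decomposes into complete $k$-cycles on the right vertex set (each complete $(k-|\sigma|)$-cycle in $r$ joined with $\partial\sigma$ gives complete $k$-cycles). The leftover — the corrected $z_{|(\K-\sigma)}$ plus the correction cycle, all supported in $\K-\sigma$ on vertices of $z$ — is a $k$-cycle of $\K-\sigma$, which is $\cdd_k$-chordal by induction, so it too decomposes into complete $k$-cycles supported on vertices of $z$. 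Summing the two decompositions yields the required decomposition of $z$.

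I expect the main obstacle to be the bookkeeping around the relative/excision step: verifying that $\lk_\sigma z'$ is a well-defined cycle in $\eLk_\sigma\K$, that the resolution obtained there lives in $\Cl_{k-1}\eLk_\sigma\K$ rather than just in $\eLk_\sigma\K$ (this is precisely where the hypothesis $\Cl_{k-1}\eLk_\sigma\K = \eLk_\sigma(\Cl_k\K)$ is used, and it must be used carefully), and that after joining back with $\partial\sigma$ the resulting chain $\sigma * r$ not only is a chain of $\Cl_k\K$ but decomposes into \emph{complete} $k$-cycles with vertex support contained in $z^{(0)}$. The sign computations for the join boundary formula (using $\partial(\sigma_1*\sigma_2) = \partial\sigma_1 * \sigma_2 + (-1)^{j_1+1}\sigma_1*\partial\sigma_2$ as recorded in Section~\ref{sec:notation}) are routine but must be tracked to ensure the correction cycle lands in $\K-\sigma$ and the final identity $\partial(\sigma*r) = z' + (\text{correction})$ holds on the nose. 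A secondary subtlety is handling the degenerate ranges of $|\sigma|$ (e.g. $\sigma$ a vertex, where $\eLk_\sigma = \Lk_\sigma$) uniformly; I would phrase the argument so the vertex case and the general $\dim\sigma\le k-1$ case are the same, relying on Lemma~\ref{lem:econe} throughout rather than Lemma~\ref{lem:cone}.
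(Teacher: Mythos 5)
Your proposal is correct and follows essentially the same route as the paper: split $z$ into the part $z_0$ supported on faces containing $\sigma$ and the rest, resolve $z_0$ relatively via the Extended Cone Lemma~\ref{lem:econe} together with the hypothesis $\Cl_{k-1}\eLk_\sigma\K=\eLk_\sigma(\Cl_k\K)$, push the correction chain into $\Cl_k(\K-\sigma)$, and finish by induction on $\K-\sigma$ and on the dimension of the extended link. The only (harmless) stylistic difference is that the paper invokes Facts~\ref{fact:Res-Dec-Chordality}(2) once at the outset and then works purely with resolutions in $\Cl_k\K$, which spares you the worry about re-expressing $\sigma*r$ as a sum of complete cycles.
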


\begin{proof}
Let $\K$ be $k$-Dirac. If $\K$ has a complete $k$-skeleton it is clearly $\cdd_k$-chordal; else let $\sigma\in\K^{(\leq k-1)}$ be such that $\K-\sigma$ is $k$-Dirac and $\eLk_\sigma \K$ is $(k-1)$-Dirac with
$\Cl_{k-1}\eLk_\sigma \K = \eLk_\sigma(\Cl_{k} \K)$.
By Lemma~\ref{fact:Res-Dec-Chordality}(2), we need to show that $\Cl_k\K$ is $\cdr_k$-chordal. We proceed by double induction, on dimension and number of faces.


Consider a $k$-cycle $z$ in $\Cl_{k} \K$, $z=z_0+z_1$ where $z_0$ is the restriction of $z$ to faces containing $\sigma$, and $z_1=z-z_0$.
Thus, $z_0$ is a (relative) cycle in $(\St_\sigma(\Cl_k \K), \eLk_\sigma(\Cl_k \K))$. As $\eLk_\sigma \K$ is $(k-1)$-Dirac, and hence by induction $\Cl_{k-1}\eLk_\sigma \K$ is $\cdr_{k-1}$-chordal, by the Extended Cone
Lemma~\ref{lem:econe}, and the requirement
$\Cl_{k-1}\eLk_\sigma \K = \eLk_\sigma(\Cl_{k} \K)$,
there is a (relative) $k$-chain $c_0$ that resolves $z_0$; that is, $c_0^{(0)}=z_0^{(0)}$ and $\partial c_0 = z_0 + b$ where $b$ is a $k$-chain in $\eLk_\sigma \Cl_k \K$, hence also in $\Cl_k(\K-\sigma)$. As both $z_0+b$ and $z_0+z_1$ are $k$-cycles, we get that $z_1-b$ is a $k$-cycle in $\Cl_k(\K-\sigma)$.
As $\K-\sigma$ is $k$-Dirac, by induction $z_1-b$ has a resolution $c_1$.
Thus, $c=c_0+c_1$ resolves $z$.
\end{proof}

The converse to this proposition does not hold:

\begin{ex}\label{ex:chordal_not_dirac}
Recall that the dunce hat is the quotient topological space obtained from identifying the edges of a triangle with an orientation that is not cyclic; see e.g. \cite{Zeeman-Dunce} for more on this remarkable space. Let $D$ denote any \emph{flag} triangulation of the Dunce hat, e.g. the barycentric subdivision of some triangulation of it; minimal 8 vertex triangulations are known, e.g. \cite{Benedetti-Lutz-Dunce}
(or any other contractible $2$-complex without free edges, i.e. where every edge is contained in at least two facets, will do).
Since $D$ is contractible it does not support a $2$-cycle; hence, $D$
is decomposition $2$-chordal. However, the extended links of all edges and vertices contain induced
circles of length at least $4$, and hence are not $1$-Dirac.
\end{ex}

\begin{rem}\label{rem:Woodroofe}
The notion of $k$-Dirac is more general than the combinatorial chordality notions of \cite{Woodroofe} and \cite{Emtander}, and therefore more general than the one of \cite{Ha-VanT} as well. All these notions imply the existence of a linear resolution and are incomparable as explained by Woodroofe \cite[Example 4.8]{Woodroofe}. The approach of both is similar: they provide combinatorial criteria on the hypergraph/clutter of minimal non-faces of $\Delta$ and use these criteria to show the existence of a linear resolution. Both criteria are inductive in nature.
In fact, choosing $\sigma$ as in the definition of $k$-Dirac to be a vertex is always possible for such complexes.
For an example of $\K$ which is $k$-Dirac for any $k\geq d$, and
its missing faces form a $(d+1)$-uniform clutter whose complement $(d+1)$-clutter is not chordal by any of \cite{Woodroofe} and \cite{Emtander}, take $\Delta$ (for $d=2$) to be the join of an edge with a square boundary, union with the two diagonals of the square. Indeed, one can verify that $\K$ is $k$-Dirac for any $k\geq 2$ (again, choosing $\sigma$ in the definition to be a vertex is always possible), but no vertex of $\K$ is \emph{simplicial} in the sense of \cite{Woodroofe}, and similarly for \cite{Emtander}.

In \cite{Cord-Lem-Sa} another combinatorial definition of elimination order was introduced, and was shown to imply decomposition $k$-chordality \cite[Thm.5.2]{Cord-Lem-Sa}. Compared to our definition of $k$-Dirac, their face $\sigma$ is always $(k-1)$-dimensional and $\eLk_\sigma \K$ is always the join of $\partial \sigma$ with a nonempty simplex, and in particular satisfies the conditions in our definition. Further, their complex must be the $k$-clique complex of a pure $k$-dimensional complex, thus the complex $\Delta$ above does not satisfy the definition in \cite{Cord-Lem-Sa} either.

\end{rem}
\subsubsection*{Cuts and homology cuts}
We need an analogue of cuts for higher dimensional complexes; this is straightforward:
denote by $\Gamma_k \K$ the graph whose vertices are the $k$-faces of $\K$, and two are connected by an edge if their intersection is a $(k-1)$-face of $\K$.
A $(k-1)$-dimensional subcomplex $\mc{C}$ of a simplicial complex $\K$ is a \Defn{$k$-cut relative to two faces $\sigma$ and $\tau$} if every path in $\Gamma_k \K$ from a
$k$-face containing $\sigma$ to a $k$-face containing $\tau$ must
pass through one of the $(k-1)$-faces of $\mc{C}$.
When $\sigma$ and $\tau$ are understood, we say that $\mc{C}$ is a relative $k$-cut, or just a $k$-cut, for short.
The \Defn{component}
$\K^{\mc{C}}_\sigma$ of $\sigma$ in $\K$ w.r.t.\ $\mc{C}$ is the
subcomplex of $\K$ induced by $k$-faces that are connected to $\St_\sigma
\K$ by a path in $\Gamma_k \K$ that does not
intersect $\mc{C}$; the component $\K^{\mc{C}}_\tau$ is defined
analogously. A $k$-cut is \Defn{minimal} if it is inclusion minimal.

We say that
a relative $k$-cut
 $\mc{C}$ is a \Defn{homology $k$-cut} if for every $(k-1)$-cycle $z$ in $\mc{C}$, and for
$L=\K^{\mc{C}}_\sigma$ or $\K^{\mc{C}}_\tau$ (or both, in which case we say the homology $k$-cut is \Defn{two-sided}),
there is a $k$-chain $c$ in $L$ such that
\[ c^{(0)} \cap \mc{C} = z^{(0)} \quad \text{and}\quad \partial c =z.\]

The next subsection clarifies the hierarchy between these definitions.

\subsubsection*{Existence of minimal cuts and homology cuts}
First, we note that minimal $k$-cuts always exist:

\begin{prp}
Let $\K$ denote any simplicial complex that has at least two $k$ faces.
 Then $\K$ has a minimal $k$-cut (relative to any two $k$-faces $\sigma$ and $\tau$ of $\K$).
\end{prp}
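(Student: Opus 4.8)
The statement is the existence of a minimal $k$-cut relative to two given $k$-faces $\sigma$ and $\tau$ in a complex $\K$ with at least two $k$-faces. The plan is to first handle the degenerate situation and then produce any $k$-cut at all, and finally shrink it to a minimal one by a standard finiteness argument. I would begin by observing that if there is no path in $\Gamma_k\K$ from a $k$-face containing $\sigma$ to a $k$-face containing $\tau$ (in particular if $\sigma$ itself contains $\tau$ or vice versa with no further structure, or if $\sigma$ and $\tau$ lie in different connected components of $\Gamma_k\K$), then the empty subcomplex vacuously satisfies the defining property and is trivially minimal, so we are done. So assume henceforth that such a path exists, which in particular forces $\St_\sigma\K$ and $\St_\tau\K$ to be distinct.

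Next I would exhibit at least one relative $k$-cut. The natural candidate is the $(k-1)$-skeleton of $\St_\sigma\K$, minus $\sigma$ itself is not needed — more precisely, take $\mc{C}_0$ to be the subcomplex of $\K$ generated by all $(k-1)$-faces that are contained in some $k$-face of the component of $\St_\sigma\K$ in $\Gamma_k\K$ but are \emph{also} contained in a $k$-face outside that component; i.e.\ $\mc{C}_0$ is the ``boundary'' between the $\sigma$-side and the rest. One checks that any path in $\Gamma_k\K$ from a $k$-face containing $\sigma$ to one containing $\tau$ must, at the first step where it leaves the $\sigma$-component, cross a $(k-1)$-face lying in both a $k$-face of the component and a $k$-face outside it, hence a $(k-1)$-face of $\mc{C}_0$. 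This shows $\mc{C}_0$ is a relative $k$-cut. (If this ``separating'' set were empty, then the $\sigma$-component and $\tau$-component would coincide, contradicting the existence of the path combined with $\St_\sigma\K\neq\St_\tau\K$; a small case check rules this out, or one simply notes that then no path can leave the component and still reach $\tau$, so the empty cut already works.)

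Finally, since $\K$ is a finite complex, the set of relative $k$-cuts (subcomplexes of $\K$, of which there are finitely many) is nonempty by the previous step, so it contains an element $\mc{C}$ that is minimal with respect to inclusion among all relative $k$-cuts. This $\mc{C}$ is by definition a minimal $k$-cut, completing the proof. The only genuinely delicate point — and the one I would write out carefully — is verifying that $\mc{C}_0$ really is a cut, i.e.\ tracing a path in $\Gamma_k\K$ and locating the first edge of the path that straddles the two sides; the rest is formal, relying only on finiteness of $\K$. I expect no essential obstacle beyond bookkeeping, since ``minimal separator exists'' arguments are always of this shape: produce one separator, then invoke finiteness.
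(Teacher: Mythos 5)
There is a genuine gap in the existence step. Your candidate cut $\mc{C}_0$ --- the set of $(k-1)$-faces lying in a $k$-face of the connected component of $\sigma$ in $\Gamma_k\K$ and also in a $k$-face outside that component --- is \emph{always empty}: if a $(k-1)$-face $\rho$ were contained in $k$-faces $F_1$ (inside the component) and $F_2$ (outside), then $F_1\cap F_2=\rho$ would make $F_1$ and $F_2$ adjacent in $\Gamma_k\K$, hence in the same component, a contradiction. Connected components have no boundary edges by definition. Moreover, your parenthetical is backwards: in the case you care about (a path from $\sigma$ to $\tau$ exists), the components of $\sigma$ and $\tau$ \emph{do} coincide --- that is exactly what the existence of a path means, not a contradiction of it --- and then the empty subcomplex is certainly not a cut, since that very path avoids it. So in the only nontrivial case you have not produced any relative $k$-cut, and the finiteness argument has nothing to shrink.

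The fix is much simpler and is what the paper does: take $\mc{C}=\partial\sigma$, the boundary complex of $\sigma$. Since $\sigma$ is itself a $k$-face, the only $k$-face containing $\sigma$ is $\sigma$, so any path in $\Gamma_k\K$ from $\sigma$ to a $k$-face containing $\tau\neq\sigma$ has length at least one, and its first edge corresponds to a $(k-1)$-face of $\sigma$, i.e.\ a face of $\partial\sigma$. Hence $\partial\sigma$ is a relative $k$-cut, and passing to an inclusion-minimal subcut (possible since $\K$ is finite --- this part of your argument is fine and matches the paper) yields a minimal $k$-cut. Your instinct that the delicate point is ``verifying that $\mc{C}_0$ really is a cut'' was correct; unfortunately that is precisely the step that fails.
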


\begin{proof}
Clearly, $\K$ admits a cut, e.g. $\mc{C}=\partial\sigma$; passing to an inclusion minimal subcut yields the claim.
\end{proof}

We can guarantee minimal cuts of a special form:
\begin{thm}\label{cor:existence_minimal_cut}
Let $\K$ denote any $k$-dimensional simplicial complex that has at least two $k$ faces. Assume $\Gamma_k\K$ is connected.
Then $\K$ has a face
$\sigma$ of dimension at most $k-1$
whose extended link is a minimal $k$-cut.
\end{thm}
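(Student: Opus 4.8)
The plan is to produce the desired face $\sigma$ by starting from an arbitrary minimal $k$-cut and "condensing" it. Fix two $k$-faces, and let $\mc{C}$ be a minimal $k$-cut relative to them, which exists by the previous proposition. Let $L$ be one of the two components, say $\K^{\mc{C}}_\sigma$. The first step is to observe that, by minimality of $\mc{C}$, every $(k-1)$-face $\rho$ of $\mc{C}$ is genuinely used: there is a path in $\Gamma_k\K$ from a $k$-face on the $\sigma$-side to a $k$-face on the $\tau$-side whose only $\mc{C}$-face is $\rho$. In particular each $\rho\in\mc{C}^{(k-1)}$ is the intersection of a $k$-face $F_\rho\in L$ with a $k$-face on the other side, so $\rho$ lies in the boundary of the component $L$ in a strong sense.

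The key step is to find a single face $\sigma$ of dimension $\le k-1$ that is contained in every $(k-1)$-face of $\mc{C}$; then $\eLk_\sigma\K$ contains $\mc{C}$, and a short argument shows $\eLk_\sigma\K$ is itself a $k$-cut, which after passing to a minimal subcut we can assume equals $\mc{C}$. To locate $\sigma$, I would use the hypothesis that $\K$ is $k$-dimensional: then the $k$-faces $F_\rho\in L$ meeting $\mc{C}$, together with the $k$-faces on the far side, and the connectivity of $\Gamma_k\K$, force the $(k-1)$-faces of $\mc{C}$ to "fan out" from a common lower-dimensional face. Concretely, take $\sigma$ to be the intersection $\bigcap_{\rho\in\mc{C}^{(k-1)}}\rho$; one must show $\sigma\ne\emptyset$ is false in general, so instead I would argue by induction/contradiction: if no such common face existed, one could split $\mc{C}$ into two proper subcomplexes each of which still separates (using that $\Gamma_k\K$ is connected and $\K$ pure $k$-dimensional so that crossing $\mc{C}$ is "local"), contradicting minimality. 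This reduces matters to the case where all $(k-1)$-faces of $\mc{C}$ share a common face $\sigma$, i.e. $\mc{C}\subseteq \St_\sigma\K - \sigma = \eLk_\sigma\K$ (as $(k-1)$-complexes), and in fact $\mc{C}\subseteq \eLk_\sigma\K$ with $\eLk_\sigma\K$ purely $(k-1)$-dimensional since $\K$ is $k$-dimensional.

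Finally I would check that $\eLk_\sigma\K$ is a minimal $k$-cut. It is a $k$-cut: any path in $\Gamma_k\K$ from $\St_\sigma$ to a far $k$-face must leave $\St_\sigma\K$, and the only $(k-1)$-faces through which it can do so lie in $\eLk_\sigma\K$; combined with the fact that $\mc{C}\subseteq\eLk_\sigma\K$ already separates, $\eLk_\sigma\K$ separates. For minimality, apply the previous proposition inside $\eLk_\sigma\K$ to pass to a minimal subcut $\mc{C}'\subseteq\eLk_\sigma\K$; but any proper subcomplex of $\eLk_\sigma\K$ of dimension $k-1$ obtained by deleting a $(k-1)$-face $\rho$ fails to separate, because $\rho$ together with $\sigma$ spans a $k$-face of $\St_\sigma\K$ that reconnects the two sides — so $\eLk_\sigma\K$ is already minimal and we may take $\mc{C}=\eLk_\sigma\K$. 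The main obstacle I anticipate is the middle step: rigorously extracting the common face $\sigma$ from minimality of $\mc{C}$, since one must rule out "spread-out" minimal cuts, and this is exactly where purity ($\dim\K=k$) and connectedness of $\Gamma_k\K$ must be combined carefully — without purity the statement can fail, so the argument must genuinely use it, presumably by noting that in a pure $k$-complex each $(k-1)$-face of $\mc{C}$ borders $k$-faces on both sides and hence is "essential" in a way that propagates the common intersection along paths in $\Gamma_k\K$.
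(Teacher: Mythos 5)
There is a genuine gap, and it sits exactly at what you call the key step. Your plan starts from an \emph{arbitrary} minimal $k$-cut $\mc{C}$ and tries to show that its $(k-1)$-faces share a common face $\sigma$, arguing that otherwise $\mc{C}$ could be split into two proper subcomplexes each of which still separates. That dichotomy is false: a minimal cut can be ``spread out'' with no common face and still be unsplittable. Already for $k=1$ take $\K=C_6$, the $6$-cycle on vertices $1,\dots,6$, and the two edges $\{1,2\}$ and $\{4,5\}$. The vertex set $\{\{3\},\{6\}\}$ is a minimal $1$-cut relative to these edges (the clockwise edge-path crosses $3$, the counterclockwise one crosses $6$, and neither vertex alone separates), yet $\{3\}$ and $\{6\}$ share no common face and this cut is not contained in $\eLk_v\K=\Lk_v\K=\{v-1,v+1\}$ for any vertex $v$. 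So your reductio has no contradiction to reach: the theorem only asserts that \emph{some} face has an extended link that is a minimal cut, not that every minimal cut condenses to one, and no argument that processes an arbitrary minimal cut can succeed. A secondary problem is your final minimality check: even for a $(k-1)$-face $\tau$, the cut $\eLk_\tau\K$ need not be minimal (a $(k-1)$-face $\rho$ of $\eLk_\tau\K$ may have $\St_\rho\K\subseteq\St_\tau\K$, in which case $\rho$ is redundant), so ``$\rho$ together with $\sigma$ spans a reconnecting $k$-face'' does not give minimality for free.

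For contrast, the paper's proof goes in the opposite direction: it starts from a face whose extended link is \emph{known} to be a cut, namely $\eLk_\tau\K$ for a $(k-1)$-face $\tau$, and then repairs non-minimality. If $\eLk_\tau\K$ is not minimal, some $(k-1)$-face $\tau'$ of it satisfies $\St_{\tau'}\K\subseteq\St_\tau\K$, which (since $\K$ is $k$-dimensional) forces $\St_{\tau'}\K=2^{\sigma'}$ for a single $k$-face $\sigma'$; one then takes $\sigma$ to be the intersection of all facets of $\sigma'$ whose closed star is $2^{\sigma'}$ (nonempty because $\Gamma_k\K$ is connected) and checks that $\eLk_\sigma\K$ is a minimal $k$-cut. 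If you want to salvage your write-up, you should abandon the ``condense an arbitrary minimal cut'' strategy and instead follow this ``start from an extended link and shrink the apex'' strategy.
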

\begin{proof}
By assumption, $\K$ is not the $k$-simplex, hence for some $(k-1)$-face $\tau$, $\eLk_{\tau}$ is a $k$-cut. Assume $\eLk_{\tau}$ is not minimal, otherwise we are done. Then there is a $(k-1)$-face $\tau'$ of $\eLk_{\tau}$ such that $\St_{\tau'}\subseteq \St_{\tau}$. There is exactly one $k$-face $\sigma'$ which contains both $\tau$ and $\tau'$ and thus $\St_{\tau'}=2^{\sigma'}$, and $\eLk_{\tau'}$ is also a $k$-cut. Let $\sigma$ be the intersection of all facets of $\sigma'$ whose (closed) star equals $\sigma'$. (This intersection is nonempty as $\Gamma_k\K$ is connected.)
Then $\eLk_{\sigma}$ is a minimal $k$-cut.
\end{proof}
The cuts in the above theorem are also homology cuts:
\begin{cor}\label{cor:existence_Homology_minimal_cut}
Under the assumptions of Theorem \ref{cor:existence_minimal_cut}, the minimal $k$-cut $\eLk_{\sigma}\K$ is a homology cut.
\end{cor}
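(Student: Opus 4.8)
I would verify the homology-cut condition only for the component $L := \K^{\mc{C}}_\sigma$ on the side of $\sigma$; by the definition of a homology $k$-cut this is enough. The first observation is formal: $\St_\sigma\K \subseteq L$. Indeed, a $k$-face $F$ of $\St_\sigma\K$ lies in a face of $\K$ containing $\sigma$, and since $\dim\K = k$ that face is $F$ itself, so $\sigma\subseteq F$; the length-$0$ path at $F$ is then a path in $\Gamma_k\K$ from a $k$-face of $\St_\sigma\K$ to $F$ traversing no $(k-1)$-face, hence disjoint from $\mc{C} = \eLk_\sigma\K$. Thus it suffices, given a $(k-1)$-cycle $z$ in $\mc{C}$, to produce a $k$-chain $c$ supported in $\St_\sigma\K$ with $\partial c = z$ and $c^{(0)}\cap\mc{C}^{(0)} = z^{(0)}$; I may assume $z\neq 0$, since otherwise $c = 0$ works.

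The chain $c$ would be built by coning. Recall $\St_\sigma\K = \sigma\ast\Lk_\sigma\K$ and $\mc{C} = \eLk_\sigma\K = \partial\sigma\ast\Lk_\sigma\K$, with $\sigma$ here the full simplex on its vertices. Fix a vertex $v$ of $\sigma$: it is a cone point of $\St_\sigma\K$, since every face of $\St_\sigma\K$ sits in a face of $\K$ containing $\sigma$ and $v\in\sigma$. Decomposing $z = z_{\hat v} + v\ast a$, where $z_{\hat v}$ collects the faces of $z$ not containing $v$, the identity $\partial z = 0$ separates into $\partial a = 0$ and $a = -\partial z_{\hat v}$, so that $c := v\ast z_{\hat v}$ is a $k$-chain supported in $\St_\sigma\K$ with $\partial c = z_{\hat v} - v\ast\partial z_{\hat v} = z$ and $c^{(0)}\subseteq\{v\}\cup z^{(0)}$. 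Moreover $z^{(0)}\subseteq c^{(0)}$ holds automatically, since each face occurring in $z = \partial c$ is a facet of a face occurring in $c$, and $z^{(0)}\subseteq\mc{C}^{(0)}$ because $z$ is a chain in $\mc{C}$. Hence $c$ already has the correct boundary, and all that remains is to choose $v$ so that the lone possibly-extraneous vertex $v$ satisfies $v\in z^{(0)}$ or $v\notin\mc{C}^{(0)}$.

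This is the one genuine point. If $\sigma$ is a single vertex $\{v\}$, then $\mc{C} = \Lk_v\K$ does not have $v$ among its vertices, so $v\notin\mc{C}^{(0)}$ and we are done. If $\dim\sigma\geq 1$, then $(\partial\sigma)^{(0)} = \sigma^{(0)}\subseteq\mc{C}^{(0)}$, so no vertex of $\sigma$ avoids $\mc{C}^{(0)}$, and I must instead locate $v\in z^{(0)}\cap\sigma^{(0)}$. For this I would prove the counting lemma: every $(k-1)$-face $\rho$ of $\mc{C} = \partial\sigma\ast\Lk_\sigma\K$ meets $\sigma^{(0)}$ in exactly $|\sigma|-1$ vertices. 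Writing $\rho = \alpha\cup\beta$ with $\alpha$ a face of $\partial\sigma$ and $\beta$ a face of $\Lk_\sigma\K$, one has $|\alpha|\leq|\sigma|-1$, while $\beta\cup\sigma\in\K$ and $\dim\K = k$ force $|\beta|\leq k+1-|\sigma|$; the identity $|\alpha|+|\beta| = k$ then pins down $|\alpha| = |\sigma|-1$, and $\rho\cap\sigma^{(0)} = \alpha$ is nonempty. Since $z\neq 0$, its support contains such a $\rho$, and any vertex of the corresponding $\alpha$ serves as $v$.

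I expect the difficulty to be bookkeeping rather than homology: the filling exists simply because $\St_\sigma\K$ is a cone, but one must check that coning over a vertex of $\sigma$ adds only that vertex, that the inclusion $z^{(0)}\subseteq c^{(0)}$ is free, and that $\partial\sigma$ being the \emph{full} boundary of $\sigma$ forces every $(k-1)$-face of $\eLk_\sigma\K$ to use all but one vertex of $\sigma$ — so the extra vertex can always be absorbed into $z^{(0)}$, except when $\sigma$ is a vertex, where it instead falls outside $\mc{C}^{(0)}$. Note that minimality of the cut plays no role: the conclusion holds whenever $\eLk_\sigma\K$ is a $k$-cut.
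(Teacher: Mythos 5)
Your proof is correct and follows essentially the same route as the paper's: both fill the cycle $z$ by coning into $\St_\sigma\K$ (the paper writes $z=\partial\sigma\ast z'$ and takes $c=\sigma\ast z'$, which is, up to sign, your $v\ast z_{\hat v}$) and then compare vertex supports against $\mc{C}$. Your version is somewhat more self-contained: your dimension count showing that every $(k-1)$-face of $\eLk_\sigma\K$ meets $\sigma$ in exactly $|\sigma|-1$ vertices is precisely the justification the paper leaves implicit for the claim that $z$ has the form $\partial\sigma\ast z'$, and your case split on $\dim\sigma$ handles $k=1$ directly instead of invoking Dirac's Minimal Cut Theorem.
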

\begin{proof}
The component $L=\St_{\sigma}$ is acyclic, hence any $(k-1)$-cycle $z$ in $\eLk_{\sigma}$ is the boundary of some $k$-chain $c$ in $L$.
In fact, $z$ must be of the form $z=\partial\sigma * z'$ for some cycle $z'$, hence $c$ can be chosen to be $c=\sigma * z'$. Thus,
for $k>1$, $c$ and $z$ have the same vertex support. For $k=1$, Dirac's Minimal Cut theorem implies the claim.
\end{proof}
However, two-sided homology cuts are harder to guarantee, and not every homology cut is two-sided:

\begin{ex}[No two-sided homology cuts]\label{ex:no_twosided_homology_cut}
Consider a triangulation of the dunce hat $D$; see also Example \ref{ex:chordal_not_dirac}. The extended link of each face $\sigma$ is \emph{not} a two-sided homology $2$-cut, else, a $1$-cycle $z$ in $\eLk_{\sigma}$ (which exists!)  would be the boundary of two $2$-chains $s,t$, one in each component, and thus $D$ would contain the $2$-cycle $s-t$, contradicting that the dunce hat is acyclic.
\end{ex}

\subsubsection*{Reverse Propagation}
Nevertheless, when two-sided homology $k$-cuts exist and a few additional mild conditions are satisfied, reverse propagation holds, which we discuss next.
\begin{thm}\label{thm:two-sided_reverse_propagation}
Let $\K$ be a decomposition $k$-chordal  simplicial complex, and let $\sigma\in\K$ be a $k$-face.
Assume $\eLk_{\sigma}\K^{(k-1)}$ is a two-sided homology $k$-cut for $\K^{(k)}$ (relative to two $k$-faces), and that $\eLk_{\sigma}\K^{(k)}$ is a $(k+1)$-cut for $\K^{(k+1)}$.
Then, $\eLk_{\sigma}\K$ is decomposition $(k-1)$-chordal.
\end{thm}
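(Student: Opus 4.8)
The plan is to verify decomposition $(k-1)$-chordality of $\eLk_\sigma\K$ directly, by \emph{lifting} a $(k-1)$-cycle across the two-sided homology cut to a $k$-cycle of $\K$, \emph{decomposing} it inside $\K$, and \emph{pushing the decomposition back down} through the cut. Let $z$ be any $(k-1)$-cycle in $\eLk_\sigma\K$; its chain support lies in $\mc{C}:=\eLk_\sigma\K^{(k-1)}$. Let $L_1,L_2$ be the two components (of the reference $k$-faces) of the two-sided homology $k$-cut $\mc{C}$ for $\K^{(k)}$. Applying the defining property of a two-sided homology cut to $z$ yields $k$-chains $c_1$ in $L_1$ and $c_2$ in $L_2$ with $\partial c_1=z=\partial c_2$ and $c_i^{(0)}\cap\mc{C}=z^{(0)}$ for $i=1,2$. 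Then $\zeta:=c_1-c_2$ is a $k$-cycle of $\K$ with $\zeta^{(0)}\subseteq c_1^{(0)}\cup c_2^{(0)}$, meeting the vertex set of $\mc{C}$ in exactly $z^{(0)}$.

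Next I would invoke decomposition $k$-chordality of $\K$ to write $\zeta=\sum_{j}\partial\tau_j$, with each $\tau_j$ a (possibly missing) $(k+1)$-simplex all of whose $k$-faces lie in $\K$ — so each $\partial\tau_j$ is a complete $k$-cycle — and $\tau_j^{(0)}\subseteq\zeta^{(0)}$. The heart of the argument is to re-organise this sum by the cut. The $k$-faces of a fixed $\tau_j$ form a clique in $\Gamma_k\K$ whose edges are labelled by the $(k-1)$-faces of $\tau_j$; by a case analysis of how those $(k-1)$-faces meet $\mc{C}=\eLk_\sigma\K^{(k-1)}$, and using that $\eLk_\sigma\K^{(k)}$ is a $(k+1)$-cut for $\K^{(k+1)}$ to locate $\tau_j$ itself relative to the two sides, one shows that each $\partial\tau_j$ is \emph{confined}: either all its $k$-faces lie in $L_1$, or all lie in $L_2$, or $\partial\tau_j$ is supported in $\St_\sigma\K$. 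This produces a partition $J=J_1\sqcup J_2\sqcup J_0$ of the index set.

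Finally I would push the decomposition down and induct, passing via Fact~\ref{fact:Res-Dec-Chordality}(2) to the equivalent task of producing a resolution of $z$ in $\Cl_{k-1}(\eLk_\sigma\K)$. Since $\partial c_1'=z$ for $c_1':=c_1-\sum_{j\in J_1}\partial\tau_j$, and a suitable measure of the ``distance of $c_1$ from $\St_\sigma\K$'' drops under this replacement, an induction reduces to the case where the $L_1$-side filling is supported in $\St_\sigma\K$; writing it as $g\cdot\sigma+c_1''$ with $c_1''\in C_k(\eLk_\sigma\K)$ and recalling $\sigma\in\Cl_{k-1}(\eLk_\sigma\K)$ because $\partial\sigma\subseteq\eLk_\sigma\K$, the chain $g\cdot\sigma+c_1''$ resolves $z$ inside $\Cl_{k-1}(\eLk_\sigma\K)$ once one knows its vertex support is exactly $z^{(0)}$ — which is what the support bookkeeping of the previous paragraphs is designed to deliver. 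Hence $\eLk_\sigma\K$ is decomposition $(k-1)$-chordal. I expect the real obstacle to be the confinement step of the second paragraph — squeezing out of the $(k+1)$-cut hypothesis that \emph{every} complete $k$-cycle occurring in the decomposition of $\zeta$ is confined to $L_1$, to $L_2$, or to $\St_\sigma\K$ — together with the attendant vertex-support bookkeeping in the induction, which is delicate precisely because such support constraints are fragile (cf.\ Example~\ref{ex:LinksResChord}): one must check that none of the cancellations smuggles in vertices outside $z^{(0)}$.
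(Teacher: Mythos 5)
Your first paragraph is exactly the paper's opening move: use the two\--sided homology cut to fill $z$ from both sides by chains $c_1,c_2$ with $\partial c_1=\partial c_2=z$ and $c_i^{(0)}\cap\mc{C}^{(0)}=z^{(0)}$, form the $k$-cycle $\zeta=c_1-c_2$, and decompose it in $\K$. The gap is in your second paragraph, and you have correctly located it yourself: the confinement trichotomy --- that each $\partial\tau_j$ has all its $k$-faces in $L_1$, or all in $L_2$, or is supported in $\St_\sigma\K$ --- is not provable, because it is false in general. A $(k+1)$-simplex $\tau_j$ sitting on, say, side $1$ of the $(k+1)$-cut can have one $k$-face inside $\eLk_\sigma\K^{(k)}$ while its remaining $k$-faces lie outside $\St_\sigma\K$; such a $\partial\tau_j$ belongs to none of your three classes. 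The subsequent induction on a ``distance from $\St_\sigma\K$'' is left undefined, so as written the argument does not close.

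The repair is to notice that you need much less than the trichotomy. The hypothesis that $\eLk_{\sigma}\K^{(k)}$ is a $(k+1)$-cut assigns each $\tau_j$ of the decomposition to a definite side; write $\widehat{c}=\widehat{s}-\widehat{t}$ for the induced splitting of the decomposition $\widehat{c}$ of $\zeta$. Now compare coefficients in $\partial\widehat{c}=c_1-c_2$ at any $k$-face $F\notin\eLk_{\sigma}\K^{(k)}$: every $(k+1)$-simplex of the decomposition containing $F$ lies on the same side as $F$, and $c_1,c_2$ are supported on opposite sides of the $k$-cut, so $\partial\widehat{s}$ agrees with $c_1$ at $F$ (and $\partial\widehat{t}$ with $c_2$). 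Hence $c_1-\partial\widehat{s}=c_2-\partial\widehat{t}$ is in one stroke a $k$-chain supported on the $k$-faces of $\eLk_{\sigma}\K$ with boundary $z$; the support bookkeeping you set up ($c_i^{(0)}\cap\mc{C}^{(0)}=z^{(0)}$ together with $\widehat{c}^{(0)}\subseteq\zeta^{(0)}$ and the fact that the two sides meet only along the cut) pins its vertex support to $z^{(0)}$. This single subtraction is the paper's entire endgame; no induction is needed, and the closing manoeuvre with $g\cdot\sigma+c_1''$ is superfluous, since the resolution is supported in $\eLk_{\sigma}\K^{(k)}$, which does not contain $\sigma$.
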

\begin{proof}
The proof follows the ideas of Dirac: Let $z$ denote any $(k-1)$-cycle in
$\eLk_{\sigma}$.
By the definition of two-sided homology cuts,
there exist $k$-chains $s$ and $t$, one in each component defined by the $k$-cut
$L=\eLk_{\sigma}^{(k-1)}$, such that
$\partial s=\partial t=z$ and
$s^{(0)}\cap L^{(0)} = t^{(0)}\cap L^{(0)} = z^{(0)}.$

Then the chain $\wh{z}=s-t$ is a $k$-cycle, so by $\updelta_k$-chordality of $\K$ it has a resolution $\wh{c}$.
Since $L'=\eLk_{\sigma}^{(k)}$ is a $(k+1)$-cut,
it separates $\wh{c}$ into chains $\wh{s}$ and $\wh{t}$, with $\wh{c}=\wh{s}-\wh{t}$. Then $\partial\wh{s}$ and $s$ are identical on the restriction to $k$-faces not in $L'$; similarly for $\partial\wh{t}$ and $t$. Thus, $s-\partial\wh{s}=-\partial \wh{t}+t$ forms the desired resolution of $z$.
\end{proof}
We end with the following problem:
\begin{problem}
For any $k>1$, find interesting families of complexes $\K$ for which there exists a face $\sigma$ satisfying the conditions in Theorem \ref{thm:two-sided_reverse_propagation}.
\end{problem}

\section*{Acknowledgments}
We thank Satoshi Murai and the referee for pointing to us the relevance of \cite{HS} and \cite{Cord-Lem-Sa} respectively,
and Isabella Novik and the referee for helpful remarks on the presentation.
The third author also thanks Isabella Novik for the research assistant positions funded through NSF Grant DMS-
1069298 and NSF Grant DMS-1361423.
{\small
\bibliographystyle{myamsalpha}
\bibliography{rigidity}
}
\end{document}